\newtheorem{thm}{Theorem}[section]
\newtheorem{thma}{Theorem}
\newtheorem{cora}{Corollary}
\newtheorem{lem}{Lemma}[section]
\newtheorem{lema}{Lemma}
\newtheorem{prop}{Proposition}[section]
\newtheorem{propa}{Proposition}
\theoremstyle{definition}
\newtheorem{defn}{Definition}[section]
\theoremstyle{remark}
\newtheorem{rem}[thm]{Remark}
\numberwithin{equation}{section}
\journal{choose}
\begin{document}
\begin{frontmatter}
\title{A Characterization of the Two-weight Inequality for Riesz Potentials on Cones of Radially Decreasing Functions}
\author[add1,add2]{Alexander Meskhi}
\ead{alex72meskhi@yahoo.com; meskhi@rmi.ge}
\author[add3]{Ghulam Murtaza}
\ead{gmnizami@googlemail.com}
\author[add4]{Muhammad Sarwar}
\ead{sarwar@uom.edu.pk}

\address[add1]{Department of Mathematical Analysis, A. Razmadze Mathematical Institute, I. Javakhishvili Tbilisi State University, 2. University Str., 0186 Tbilisi, Georgia.}
\address[add2]{Department of Mathematics,  Faculty of Informatics and Control Systems, Georgian Technical University, 77, Kostava St., Tbilisi, Georgia.}
\address[add3]{Department of Mathematics,  GC University, Faisalabad, Pakistan.}
\address[add4]{ Department of Mathematics University of Malakand, Chakdara, Dir(L), Khyber Pakhtunkhwa, Pakistan}

\begin{abstract}
We establish necessary and sufficient conditions on a weight pair $(v,w)$ governing the boundedness of the Riesz potential operator $I_{\alpha}$ defined on a homogeneous group $G$ from $L^p_{dec,r}(w, G)$ to $L^q(v, G)$, where $L^p_{dec,r}(w, G)$ is the Lebesgue space defined for non-negative radially decreasing functions on $G$. The same problem is also studied for the potential operator with product kernels $I_{\alpha_1, \alpha_2}$ defined on a product of two homogeneous groups $G_1\times G_2$. In the latter case weights, in general, are not of product type. The derived results  are new even for Euclidean spaces. To get the main results we use Sawyer type duality theorems (which are also discussed in this paper) and two--weight Hardy type inequalities on $G$ and $G_1\times G_2$ respectively.
\end{abstract}

\begin{keyword}
Riesz potential, multiple Riesz potential, homogeneous group, cone of decreasing functions, two-weight inequality, Sawyer's duality theorem
  \MSC[2010] 42B20, 42B25.
  \end{keyword}
\end{frontmatter}
% ----------------------------------------------------------------
%\vskip+2cm

\newpage

\section{Introduction}
A homogeneous group is a simply connected nilpotent Lie
group G on a Lie algebra g with the one-parameter group of
transformations $\delta_{t}=exp(A\ log\ t)$, $t>0$, where A is a
diagonalized linear operator in $G$ with positive eigenvalues. In
the homogeneous group $G$ the mappings $exp\ o \ \delta_t\ o\
exp^{-1}$, $t>0$, are automorphisms in G, which will be again
denoted by $\delta_{t} $. The number $Q=tr\ A $ is the homogeneous
dimension of $G$. The symbol $e$ will stand for the neutral element in
$G$.
\par
It is possible to equip $G$ with a homogeneous norm $ r:G\rightarrow [
\ 0,\infty)$ which is continuous on $G$, smooth on $G\backslash\{e\}$
and satisfies the conditions:

(i) \ \ \  $r(x)=r(x^{-1})$  for every $x\in \ G$;

(ii)\ \ \ $r(\delta_t x)= tr(x) $ for every $x\in G$ and $t>0$;

(iii)\ \ $ r(x) = 0$ if and only if  $x=e$ ;

(iv) \  There exists $ c_o >0 $ such that
$$  r(xy) \leq c_o (r(x)+r(y)),\;\; x,y  \in  G. $$

In the sequel we denote by $B(a,t)$ an open
ball with the center $a$ and radius
$t>0$, \emph{i.e.}\\
$$ B(a,t ) : = \{ y \in \ G; \ r(ay^{-1}) < t  \}.$$

It can be observed that $ \delta _ t B(e,1) =B(e, t).$

Let us fix a Haar measure $|\cdot|$ in G such that $|B(e,1)|=1$. Then
$|\delta_{t} E|=t^Q|E|$. In particular, $|B(x,t)|=t^Q \hbox { for }x
\ \in \ G , \ t>0$.

Examples of homogeneous groups are: the Euclidean n-dimensional
space $\mathbb{R}^{n}$,  the Heisenberg   group, upper triangular
groups, etc. For the definition and basic properties of the
homogeneous  group we refer to  \cite{FS}, p. 12.

\vskip+0.2cm

\indent An everywhere positive function $\rho$ on $G$ will be called a weight. Denote by $L^p(\rho, G) \ (1<p<\infty)$ the
weighted Lebesgue space, which is the space of all measurable
functions $f:G\rightarrow \mathbb{C}$ defined by the norm
$$\|f\|_{L^p(\rho, G)}=\Big(\int\limits_G
|f(x)|^p \rho(x)dx\Big)^{\frac{1}{p}}<\infty. \nonumber $$ If $\rho\equiv
1$, then    we  we use the notation  $L^p(G).$

Denote by ${\mathcal{DR}}(G)$ the class of all radially decreasing functions on $G$ with values in ${\Bbb{R}}_+$, i.e. the fact that $\phi \in {\mathcal{DR}}(G)$ means that there is decreasing $\bar{\phi}: {\Bbb{R}}_+ \mapsto {\Bbb{R}}_+$ such that $\varphi(x)= \bar{\phi}(r(x))$. In the sequel we will use the symbol $\phi$ itself for $\bar{\phi}$; the fact that $\phi \in {\mathcal{DR}}(G)$ will be written also by the symbol $\varphi \downarrow r$. Let $G_1$ and $G_2$ be homogeneous groups. We say that a function $\psi: G_1\times G_2\mapsto {\Bbb{R}}_+$ is radially decreasing if it is such in each variable separately uniformly to another one. The fact that $\psi$ is radially decreasing on $G_1\times G_2$  will be denoted as $\psi \in {\mathcal{DR}}(G_1\times G_2)$.

Let $$(I_{\alpha}f)(x) = \int\limits_G f(y) \big(r(xy^{-1})\big)^{\alpha-Q} dy, \;\;\;\; 0<\alpha <Q, $$
be the Riesz potential defined on $G$, where $r$ is the homogeneous norm and $dy$ is the normalized Haar measure on $G$. The operator $I_{\alpha}$ plays a fundamental role in harmonic analysis, e.g.,  in the theory of Sobolev embeddings, in the theory of sublaplacians on nilpotent groups etc. Weighted estimates for multiple Riesz potentials can be applied, for example, to establish Sobolev and Poincar\'e inequalities on product spaces (see, e.g., \cite{ToPr}).

Let  $G_1$ and $G_2$ be homogeneous groups with homogeneous norms $r_1$ and $r_2$ and homogeneous dimensions $Q_1$ and $Q_2$ respectively. We define the potential operator on $G_1\times G_2$ as follows
$$ I_{\alpha, \beta} f(x,y) = \iint\limits_{G_1\times G_2} f(t,\tau) \big(r_1(xt^{-1})\big)^{\alpha-Q_1} \big(r_2(y\tau^{-1})\big)^{\beta-Q_2}dtd\tau, \;\;\; (x,y)\in G_1\times G_2, \;\; 0<\alpha<Q_1,\; 0<\beta<Q_2. $$

Our aim is to derive two-weight criteria for $I_{\alpha}$ on the cone of radially decreasing functions on $G$. The same problem is also studied for the potential operator with product kernels $I_{\alpha, \beta}$ defined on a product of two homogeneous groups, where  only the right--hand side weight is of product type. As far as we know the derived results for $I_{\alpha, \beta}$ are new even in the case of  Euclidean spaces. The proofs of the main results are based on E. Sawyer (see \cite{Saw})  type  duality theorem which is also true for homogeneous groups (see Propositions  \ref{Saw-Dua} and  \ref{BHP2} below) and  Hardy type two-weight inequalities in homogeneous groups.
Analogous results for multiple potential  operators defined  on ${\Bbb{R}}^n_+$ with respect to the cone of non-negative decreasing functions on ${\Bbb{R}}^n_+$ were studied in \cite{MMS}, \cite{MM}. It should be emphasized that the two-weight problem for multiple Hardy operator for the cone of decreasing functions on ${\Bbb{R}}^n_+$ was investigated by S. Barza, H. P. Heinig and L. -E. Persson \cite{BHP} under the restriction that both weights are of product type.

Historically the one-weight inequality for the classical Hardy operator was characterize by M. A. Arino and B. Muckenhoupt \cite{AM} under the so called  $B_p$ condition. The same problem for multiple Hardy transform was studied by N. Arcozzi,  S. Barza, J. L.  Garcia-Domingo  and J. Soria \cite{ABGS}. This problem in the the two-weight setting was solved by E. Sawyer \cite{Saw}. Some sufficient conditions guaranteeing  the two--weight inequality for the Riesz potential $I_{\alpha}$ on ${\Bbb{R}}^n$ was given by Y. Rakotondratsimba \cite{Rak}. In particular, the author showed that $I_{\alpha}$ is bounded from $L^p_{dec,r}(w, {\Bbb{R}}^n)$ to $L^q(v, {\Bbb{R}}^n)$ if the weighted  Hardy operators $({\mathcal{H}}f)(x)=\frac{1}{|x|^{n-\alpha}} \int\limits_{|y|<|x|} f(y) dy $ and $({\mathcal{H}}'f)(x)= \int\limits_{|y|>|x|} \frac{f(y)}{|y|^{n-\alpha}} dy$ are bounded from $L^p(w, {\Bbb{R}}^n)$ to $L^q(v,{\Bbb{R}}^n)$. In fact the author studied  the problem on the cone of monotone decreasing functions.

Now we give some comments regarding the notation: in the sequel under the symbol $A\approx B$ we mean that there are positive  constants $c_1$ and $c_2$  (depending on appropriate parameters) such that $ c_1A \leq B \leq c_2 A$;  $A \ll B$ means that there is a positive constant $c$ such that $A \leq  c B$; integral over a product set $E_1\times E_2$ from $g$ will be denoted by $\iint\limits_{E_1\times E_2} g(x,y) dx dy$ or $\int_{E_1} \int_{E_2} g(x,y) dx dy$; for a weight functions $w$ and $w_i$  on $G$, by the symbols $W(t)$ and $W_i(t)$ will be denoted the integrals  $\int\limits_{B(e,t)} w(x)dx $ and $\int\limits_{B(e_i,t)} w_i(x)dx $ respectively; for a weight $w$ on $G_1\times G_2$, we denote  $W(t,\tau):= \int\limits_{B(e_1,t)\times B(e_2,\tau)} w(x,y) dx dy$, where $e_1$ and $e_1$ are neutral elements in $G_1$ and $G_2$ respectively. Finally we mention that constants (often different constants in one and the same lines of inequalities) will be denoted by $c$ or $C$. The symbol $p'$ stands for  the conjugate number of $p$:  $p'=p/(p-1)$, where $1<p<\infty$.

\section{Preliminaries}

We begin this section with the statements regarding polar coordinates in $G$ (see e.g., \cite{FS}, P. 14).
\vskip+0.2cm
\begin{propa}\label{polar}  Let $G$ be a homogeneous group and let
$S=\{x\in G:r(x)=1\}$. There is a (unique) Radon measure $\sigma$ on
$S$ such that for all $u\in L^1(G)$,
$$\int \limits_G u(x)dx=\int\limits_0^\infty \int\limits_S
u(\delta_t\overline y)t^{Q-1}d\sigma(\overline y)dt.$$
\end{propa}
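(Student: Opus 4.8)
The plan is to realize the right-hand side as the pushforward of the Haar measure under the polar-coordinate map $\Phi\colon(0,\infty)\times S\to G\setminus\{e\}$, $\Phi(t,\overline y)=\delta_t\overline y$, and to extract the factor $t^{Q-1}\,d\sigma$ purely from the homogeneity relation $|\delta_t E|=t^Q|E|$. First I would check that $\Phi$ is a continuous bijection onto $G\setminus\{e\}$ with continuous inverse $x\mapsto\bigl(r(x),\delta_{r(x)^{-1}}x\bigr)$; this uses properties (ii) and (iii) of $r$ together with the continuity of $r$ and its smoothness off $e$. Since $\{e\}$ is a single point and $|B(e,t)|=t^Q\to 0$ as $t\to 0$, the neutral element is Haar-null, so integration over $G$ coincides with integration over $G\setminus\{e\}$ and I may transport the Haar measure to $(0,\infty)\times S$ through $\Phi$.

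Define the pulled-back measure $\mu(A):=|\Phi(A)|$ for Borel $A\subseteq(0,\infty)\times S$. The \emph{key} structural fact is a product decomposition $\mu=(t^{Q-1}\,dt)\otimes\sigma$ for a suitable Radon measure $\sigma$ on the compact sphere $S$. To produce $\sigma$ I set, for Borel $E\subseteq S$,
$$\sigma(E):=Q\,\bigl|\{\delta_t\overline y:\ 0<t\le 1,\ \overline y\in E\}\bigr|=Q\,\mu\bigl((0,1]\times E\bigr),$$
so that $\sigma(S)=Q\,|B(e,1)|=Q$. Homogeneity enters through the observation that $\delta_s\circ\Phi(t,\overline y)=\Phi(st,\overline y)$; writing $s\cdot A=\{(st,\overline y):(t,\overline y)\in A\}$ this yields $\mu(s\cdot A)=|\delta_s\Phi(A)|=s^Q\mu(A)$. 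Applying this to $A=(0,1]\times E$ gives $\mu\bigl((0,b]\times E\bigr)=b^Q\sigma(E)/Q$, and subtracting two such identities shows $\mu\bigl((a,b]\times E\bigr)=\sigma(E)\int_a^b t^{Q-1}\,dt$ for every interval $(a,b]$ and every Borel $E\subseteq S$. Since the half-open boxes $(a,b]\times E$ form a $\pi$-system generating the product $\sigma$-algebra and the measures $\mu$ and $(t^{Q-1}\,dt)\otimes\sigma$ agree on it, a monotone-class (Dynkin) argument upgrades this to equality on all Borel sets.

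With the decomposition in hand, the formula follows from the change-of-variables identity $\int_G u\,dx=\int_{(0,\infty)\times S}u\bigl(\Phi(t,\overline y)\bigr)\,d\mu(t,\overline y)$ together with Tonelli's theorem: I would verify it first for indicators of boxes, extend to nonnegative simple functions and then to arbitrary nonnegative measurable $u$ by monotone convergence, and finally to $u\in L^1(G)$ by splitting into real and imaginary, positive and negative parts. Uniqueness I would obtain by testing against separated functions $u(x)=f(r(x))\,g\bigl(\delta_{r(x)^{-1}}x\bigr)$, which force the quantity $\bigl(\int_0^\infty f(t)t^{Q-1}\,dt\bigr)\int_S g\,d\sigma$ to be independent of the representing measure; choosing $f$ with nonzero radial integral and letting $g$ range over $C(S)$, the Riesz representation theorem pins $\sigma$ down uniquely.

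I expect the main obstacle to be the measure-theoretic bookkeeping rather than any single hard estimate: verifying the bi-measurability of $\Phi$ along with the negligibility of $\{e\}$, checking that $\sigma$ as defined is genuinely a Radon (in particular regular) measure on $S$, and making the product-decomposition step rigorous through the monotone-class argument. The homogeneity relation $|\delta_t E|=t^Q|E|$ does all the real work in separating the radial and spherical variables; once $\mu=(t^{Q-1}\,dt)\otimes\sigma$ is secured, the stated identity is a routine application of Tonelli's theorem.
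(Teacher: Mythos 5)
Your proof is correct. The paper itself gives no proof of this proposition---it is stated as background with a reference to Folland--Stein \cite{FS}, p.~14---and your argument is essentially the standard one from that source: define the cone measure $\sigma(E)=Q\,\mu\bigl((0,1]\times E\bigr)$, use the dilation homogeneity $|\delta_s E|=s^Q|E|$ to identify $\mu$ with $(t^{Q-1}\,dt)\otimes\sigma$ on half-open boxes and hence (by uniqueness of measures on a generating $\pi$-system) everywhere, conclude by Tonelli and the usual approximation from indicators to $L^1$ functions, and settle uniqueness of $\sigma$ via separated test functions and the Riesz representation theorem.
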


Let $a$ be a positive number. The two--weight inequality for the Hardy-type transforms
$$ (H^{a}f)(x)= \int\limits_{B(e, a r(x))} f(y) dy, \;\;\;\; x\in G, $$

$$ (\widetilde{H}^{a}f(x) = \int\limits_{G\setminus B(e, a r(x))} f(y) dy, \;\;\;\; x\in G, $$
reeds as follows (see \cite{EKM}, Ch.1 for more general case, in particular for quasi-metric measure spaces):

\begin{thma}\label{Hardy-G}  Let $1<p\leq q<\infty$ and let $a$ be a positive number. Then

\rm{(i)}

The operator $H^a$ is bounded from $L^p(u_1, G)$ to $L^q(u_2,G)$ if and only if
$$ \sup_{t>0}  \bigg( \int_{G\setminus B(e,t)} u_2(x) dx \bigg)^{1/q} \bigg( \int\limits_{B(e,at)} u_1^{1-p'}(x) dx \bigg)^{1/p'}<\infty. $$

\rm{(ii)}

The operator $\widetilde{H}^a$ is bounded from $L^p(u_1, G)$ to $L^q(u_2,G)$ if and only if
$$ \sup_{t>0}  \bigg( \int_{B(e,t)} u_2(x) dx \bigg)^{1/q} \bigg( \int\limits_{G\setminus B(e,at)} u_1^{1-p'}(x) dx \bigg)^{1/p'}<\infty. $$

\end{thma}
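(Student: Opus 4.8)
The plan is to exploit that both $H^a f$ and $\widetilde H^a f$ depend on the point $x$ only through its homogeneous norm $r(x)$, which collapses the problem to a genuinely one-dimensional weighted Hardy inequality on $(0,\infty)$. First I would pass to polar coordinates via Proposition~\ref{polar}. For $f\ge 0$ put
\[
\phi(s)=\int_S f(\delta_s\bar y)\,s^{Q-1}\,d\sigma(\bar y),\qquad \omega_2(\rho)=\int_S u_2(\delta_\rho\bar x)\,\rho^{Q-1}\,d\sigma(\bar x),\qquad \Sigma(s)=\int_S u_1^{1-p'}(\delta_s\bar y)\,s^{Q-1}\,d\sigma(\bar y),
\]
so that $\int_{B(e,R)}f=\int_0^R\phi$, $\int_{G\setminus B(e,t)}u_2=\int_t^\infty\omega_2$, and $\int_{B(e,t)}u_1^{1-p'}=\int_0^t\Sigma$. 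With this notation $(H^af)(x)=\int_0^{a r(x)}\phi=:(\mathcal H^a\phi)(r(x))$ and likewise $(\widetilde H^a f)(x)=\int_{a r(x)}^\infty\phi$, where $\mathcal H^a$ is the one-dimensional Hardy operator with the dilation $a$ placed in the upper limit.

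For the sufficiency in (i) I would, for each fixed $s$, apply H\"older's inequality in the sphere variable $\bar y$ with exponents $p,p'$ to the integrand $f\cdot u_1^{1/p}\cdot u_1^{-1/p}$; using $u_1^{-p'/p}=u_1^{1-p'}$ this yields $\phi(s)\le F(s)^{1/p}\Sigma(s)^{1/p'}$, where $F(s)=\int_S f^p(\delta_s\bar y)u_1(\delta_s\bar y)s^{Q-1}\,d\sigma(\bar y)$ satisfies $\int_0^\infty F=\norm{f}_{L^p(u_1,G)}^p$. Since $p/p'=p-1$, raising to the $p$th power and integrating gives $\int_0^\infty\phi^p\,\Sigma^{1-p}\le\norm{f}_{L^p(u_1,G)}^p$. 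It therefore suffices to prove the one-dimensional two-weight inequality
\[
\Big(\int_0^\infty(\mathcal H^a\phi)(\rho)^q\,\omega_2(\rho)\,d\rho\Big)^{1/q}\le C\Big(\int_0^\infty\phi(s)^p\,\Sigma(s)^{1-p}\,ds\Big)^{1/p}
\]
for all $\phi\ge 0$. This is the classical one-dimensional two-weight Hardy inequality for $1<p\le q<\infty$ (Muckenhoupt--Bradley; cf.\ \cite{EKM}), which holds precisely when $\sup_{t>0}\big(\int_t^\infty\omega_2\big)^{1/q}\big(\int_0^{at}(\Sigma^{1-p})^{1-p'}\big)^{1/p'}<\infty$. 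Because $(\Sigma^{1-p})^{1-p'}=\Sigma^{(1-p)(1-p')}=\Sigma$ and $\int_0^{at}\Sigma=\int_{B(e,at)}u_1^{1-p'}$, this is exactly the hypothesis of (i), with the dilation $a$ entering correctly in the inner integral; chaining the three displays yields $\norm{H^af}_{L^q(u_2,G)}\le C\norm{f}_{L^p(u_1,G)}$.

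For the necessity in (i) I would avoid the sphere issue by testing on a radial function: fix $t>0$ and take $f=u_1^{1-p'}\chi_{B(e,at)}$, so that $\norm{f}_{L^p(u_1,G)}^p=\int_{B(e,at)}u_1^{1-p'}$. For every $x$ with $r(x)>t$ one has $B(e,at)\subseteq B(e,a r(x))$, whence $(H^af)(x)\ge\int_{B(e,at)}u_1^{1-p'}$. Inserting this into the assumed bound, restricting the outer integral to $G\setminus B(e,t)$, and dividing out the common factor (using $q-q/p=q/p'$) produces $\big(\int_{G\setminus B(e,t)}u_2\big)^{1/q}\big(\int_{B(e,at)}u_1^{1-p'}\big)^{1/p'}\le C$, and taking the supremum over $t$ closes this direction. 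Part (ii) follows by the same scheme, since in polar coordinates $\widetilde H^a$ becomes the conjugate Hardy operator $\phi\mapsto\int_{a\rho}^\infty\phi$, whose one-dimensional criterion is exactly the condition in (ii); alternatively one records the duality $(H^a)^*=\widetilde H^{1/a}$ and transfers (i). The main obstacle, to my mind, is organizing the spherical H\"older step so that it produces precisely the one-dimensional source weight $\Sigma^{1-p}$ whose dual weight regenerates $\int u_1^{1-p'}$ over balls; the exponent identities $(1-p')p+1=1-p'$ and $(1-p)(1-p')=1$, together with the hypothesis $p\le q$ (which is what makes the one-dimensional criterion a single supremum rather than an integral condition), are the points requiring care.
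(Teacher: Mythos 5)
Your argument is correct, and it takes a genuinely different route from the paper, because the paper in fact gives no proof of Theorem A: it simply quotes the result from \cite{EKM}, Ch.~1, where the two-weight Hardy inequality is established in the greater generality of quasi-metric measure spaces. Your proof is self-contained for homogeneous groups: Proposition \ref{polar} plus H\"older's inequality on the unit sphere $S$ (splitting $s^{Q-1}=s^{(Q-1)/p}s^{(Q-1)/p'}$) reduces sufficiency to the classical one-dimensional Muckenhoupt--Bradley theorem, and the exponent identities $u_1^{-p'/p}=u_1^{1-p'}$ and $(1-p)(1-p')=1$ guarantee that the spherical weight $\Sigma^{1-p}$ has dual weight exactly $\Sigma$, so the one-dimensional criterion collapses back to $\int_{B(e,at)}u_1^{1-p'}$; necessity is then done directly on $G$ by testing, so the non-radiality of $u_1,u_2$ never causes trouble. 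This is in the same spirit as the alternative the authors themselves sketch at the end of the proof of their product-Hardy statement (prove the one-dimensional inequality first, then pass to the group via Proposition \ref{polar}). What your route buys is elementarity and self-containedness; what the paper's citation buys is generality, since no polar-coordinate structure is available in an abstract quasi-metric measure space. Your duality bookkeeping $(H^a)^*=\widetilde{H}^{1/a}$, with the substitution $p\to q'$, $q\to p'$, $u_1\to u_2^{1-q'}$, $u_2\to u_1^{1-p'}$, does transfer (i) into (ii) correctly.

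One minor technical point to tidy up: in the necessity step you divide by $\int_{B(e,at)}u_1^{1-p'}$, which could a priori be infinite. The standard repair is to test instead with $f_n=\min\big(u_1^{1-p'},n\big)\chi_{B(e,at)}$: since $(1-p')(p-1)=-1$ one has $f_n^p u_1\le f_n$, hence $\|f_n\|_{L^p(u_1,G)}^p\le\int_{B(e,at)}f_n<\infty$, and running your computation with $f_n$ and letting $n\to\infty$ by monotone convergence yields the condition (and, in passing, shows that $\int_{B(e,at)}u_1^{1-p'}$ is finite whenever $\int_{G\setminus B(e,t)}u_2>0$).
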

We refer also to \cite{DHK} for the Hardy inequality written for balls with center at the origin.
\vskip+0.2cm

In the sequel we denote $H^{1}$ by $H$.

\vskip+0.2cm

The following statement for Euclidean spaces was derived by S. Barza, M. Johansson and L. -E. Persson \cite{BJP}.
\begin{propa}\label{Duality} Let $w$ be a weight function on $G$ and let $1<p<\infty$. If $f\in {\mathcal{DR}}(G)$ and $g \geq 0$, then
$$ \sup_{f \downarrow r} \frac{\int\limits_{G} f(x)g(x) dx  }{\Big(\int\limits_G f(x)^p w(x) dx \Big)^{1/p}} \approx \|w\|_{L^1(G)}^{-1/p} \|g\|_{L^1(G)}+ \bigg( \int\limits_{G} H^{p'}(r(x)) W^{-p'}(r(x)) w(x) dx \bigg)^{1/p'}, $$
where $H(t)=\int\limits_{B(e,t)}g(x) dx$, $W(t)=\int\limits_{B(e,t)}w(x) dx$.
\end{propa}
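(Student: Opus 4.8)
The plan is to reduce the stated equivalence to the one--dimensional Sawyer-type duality principle by passing to polar coordinates on $G$. Every $f\downarrow r$ has the form $f(x)=\bar f(r(x))$ with $\bar f:{\Bbb{R}}_+\to{\Bbb{R}}_+$ decreasing, and as $f$ runs over ${\mathcal{DR}}(G)$ its profile $\bar f$ runs over \emph{all} decreasing functions on ${\Bbb{R}}_+$. Hence both the numerator and the denominator of the quotient depend on $f$ only through $\bar f$, and the supremum over the cone ${\mathcal{DR}}(G)$ becomes a supremum over decreasing functions of one variable. First I would introduce the radialized densities
$$ \tilde g(t)=t^{Q-1}\int_S g(\delta_t\bar y)\,d\sigma(\bar y),\qquad \tilde w(t)=t^{Q-1}\int_S w(\delta_t\bar y)\,d\sigma(\bar y), $$
so that Proposition \ref{polar} applied to $x\mapsto f(x)g(x)$ and to $x\mapsto f(x)^p w(x)$ yields
$$ \int_G f(x)g(x)\,dx=\int_0^\infty \bar f(t)\,\tilde g(t)\,dt,\qquad \int_G f(x)^p w(x)\,dx=\int_0^\infty \bar f(t)^p\,\tilde w(t)\,dt. $$

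The second step is to recognize, again from Proposition \ref{polar}, that $H(t)=\int_0^t\tilde g(s)\,ds$ and $W(t)=\int_0^t\tilde w(s)\,ds$, i.e. $H$ and $W$ are the primitives of the radialized densities. The quotient in question therefore equals
$$ \frac{\int_0^\infty \bar f(t)\,\tilde g(t)\,dt}{\big(\int_0^\infty \bar f(t)^p\,\tilde w(t)\,dt\big)^{1/p}}, $$
whose supremum over decreasing $\bar f$ is, by the one--dimensional Sawyer-type duality principle (cf. \cite{Saw}, \cite{BJP}), equivalent to
$$ \Big(\int_0^\infty \tilde w(t)\,dt\Big)^{-1/p}\int_0^\infty \tilde g(t)\,dt+\Big(\int_0^\infty H^{p'}(t)\,W^{-p'}(t)\,\tilde w(t)\,dt\Big)^{1/p'}, $$
with constants depending only on $p$.

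It remains to translate these one--dimensional quantities back to $G$. From the definition of $\tilde g,\tilde w$ we have $\int_0^\infty\tilde g=\|g\|_{L^1(G)}$ and $\int_0^\infty\tilde w=\|w\|_{L^1(G)}$, so the first term is exactly $\|w\|_{L^1(G)}^{-1/p}\|g\|_{L^1(G)}$. For the second term I would run Proposition \ref{polar} in the reverse direction on the function $x\mapsto H^{p'}(r(x))W^{-p'}(r(x))w(x)$: since $r(\delta_t\bar y)=t$ for $\bar y\in S$, the factor $H^{p'}(r(x))W^{-p'}(r(x))$ is constant on each sphere $r(x)=t$, and the integral collapses to $\int_0^\infty H^{p'}(t)W^{-p'}(t)\tilde w(t)\,dt$. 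This reproduces the second term on the right-hand side, closing the chain of equivalences.

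The main obstacle I anticipate is not any single estimate but the careful bookkeeping of the polar-coordinate change of variables and the verification that restricting the supremum to profiles $\bar f$ loses nothing: one must check that $f\mapsto\bar f$ is a bijection between ${\mathcal{DR}}(G)$ and the decreasing functions on ${\Bbb{R}}_+$, and that both functionals genuinely factor through $\bar f$ via the radialized densities. A secondary point is the treatment of degenerate cases (for instance $W(t)=0$ on an initial interval, or $\|w\|_{L^1(G)}=\infty$), which are handled exactly as in the one--dimensional theory and do not affect the equivalence constants.
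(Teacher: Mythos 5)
Your proof is correct, but it takes a genuinely different route from the paper's. The paper does not pass through the one-dimensional theorem: its proof of Proposition \ref{Duality} consists in repeating, on $G$, the direct Sawyer-type argument used for Theorem 3.1 of \cite{BJP}, with two group-specific ingredients substituted for their Euclidean counterparts, namely polar coordinates (Proposition \ref{polar}) and the weighted Hardy inequality of Lemma A (itself deduced from Theorem \ref{Hardy-G}, part (ii), with $a=1$, $p=q$). You instead radialize and reduce: setting $\tilde g(t)=t^{Q-1}\int_S g(\delta_t\bar y)\,d\sigma(\bar y)$ and likewise $\tilde w$, you use Proposition \ref{polar} to show that the numerator, the denominator, $H$, $W$, and both terms of the right-hand side depend on the data only through $(\bar f,\tilde g,\tilde w)$ (the identifications $H(t)=\int_0^t\tilde g$ and $W(t)=\int_0^t\tilde w$ are polar coordinates applied to $g\chi_{B(e,t)}$ and $w\chi_{B(e,t)}$, using $r(\delta_t\bar y)=t$ on $S$); you note that the supremum over ${\mathcal{DR}}(G)$ is exactly the supremum over decreasing profiles $\bar f$ on $\mathbb{R}_+$, which is immediate from the paper's definition of ${\mathcal{DR}}(G)$; and you then invoke Sawyer's one-dimensional duality theorem \cite{Saw} as a black box, whose constants depend only on $p$, as required. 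Your route buys brevity and transparency: Lemma A becomes unnecessary, and the proposition is exhibited as a formal consequence of polar coordinates plus the known one-dimensional result. The paper's route buys a self-contained argument in the group setting whose machinery (direct duality argument plus Hardy-type inequalities on $G$) is reused in the same form for the product case (Propositions \ref{BHP1} and \ref{BHP2}), where a reduction of your kind would have to target the biparameter duality theorem of \cite{BHP} on $\mathbb{R}_+\times\mathbb{R}_+$ rather than \cite{Saw}. If you write your argument up, make two technicalities explicit: Proposition \ref{polar} is stated for $u\in L^1(G)$, so extend it to arbitrary non-negative measurable integrands by monotone convergence before applying it to $fg$ and $f^pw$; and verify the local integrability hypotheses needed to quote \cite{Saw}, which hold for $\tilde w$ because $\int_0^T\tilde w(t)\,dt=W(T)<\infty$ for every $T$.
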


The proof of  Proposition \ref{Duality} repeats the arguments (for ${\Bbb{R}}^n$) used in the proof of Theorem 3.1 of \cite{BJP} taking  Proposition \ref{polar} and the following lemma into account.

\begin{lema} let $1<p<\infty$. For a weight function $w$, the inequality
$$ \int\limits_G w(x) \bigg( \int\limits_{G\setminus B(e, r(x))} f(y) dy\bigg)^p dx \leq p \int\limits_G f^p(x)  W^p(r(x)) w^{1-p}(x) dx , \;\;\;\; f\geq 0,$$
holds.
\end{lema}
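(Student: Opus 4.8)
The plan is to pass to polar coordinates, reduce to a one–dimensional Hardy estimate, and prove the latter by an integration–by–parts identity. Set $F(s):=\int_{G\setminus B(e,s)}f(y)\,dy$; this depends on $x$ only through $s=r(x)$ and is nonincreasing. By Proposition \ref{polar}, writing $\omega(s):=\int_S w(\delta_s\overline y)s^{Q-1}\,d\sigma(\overline y)$ and $\phi(s):=\int_S f(\delta_s\overline y)s^{Q-1}\,d\sigma(\overline y)$, one has $\int_G w(x)F(r(x))^p\,dx=\int_0^\infty\omega(s)F(s)^p\,ds$, together with $W(r(x))=\int_0^{r(x)}\omega$ and $F(s)=\int_s^\infty\phi$. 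On the right–hand side, Hölder's inequality in $\overline y\in S$ gives $\phi(s)\le\psi(s)^{1/p}\omega(s)^{1/p'}$ with $\psi(s):=\int_S (f^pw^{1-p})(\delta_s\overline y)s^{Q-1}\,d\sigma$, whence
\[
\int_G f^p\, W^p\, w^{1-p}\,dx=\int_0^\infty W^p(s)\psi(s)\,ds\ge\int_0^\infty W^p(s)\phi(s)^p\omega(s)^{1-p}\,ds .
\]
Thus it suffices to establish the radial inequality with $\phi,\omega$ playing the roles of $f,w$; this reduction is lossless for radial data, so the one–dimensional case already dictates the best constant.

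For the radial inequality, since $W'=\omega$, $W(0)=0$ and $\tfrac{d}{ds}F^p=-pF^{p-1}\phi$, integration by parts gives the identity
\[
\int_0^\infty\omega(s)F(s)^p\,ds=p\int_0^\infty W(s)F(s)^{p-1}\phi(s)\,ds ,
\]
the boundary terms vanishing whenever the left–hand side is finite. Factoring $WF^{p-1}\phi=\big(\omega^{1/p'}F^{p-1}\big)\big(\omega^{-1/p'}W\phi\big)$ and applying Hölder with exponents $p'$ and $p$ bounds the right–hand side by $\big(\int_0^\infty\omega F^p\big)^{1/p'}\big(\int_0^\infty W^p\phi^p\omega^{1-p}\big)^{1/p}$; absorbing the first factor into the left–hand side yields $\int_0^\infty\omega F^p\le p^{p}\int_0^\infty W^p\phi^p\omega^{1-p}$, and undoing the reduction gives the inequality of the statement, with constant $p^p$ (see below).

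The one genuinely delicate point is the constant, and here the displayed factor $p$ must be examined. The integration–by–parts step contributes the factor $p$, but the self–improving Hölder absorption raises it to $p^{p}$, and this is sharp rather than an artefact of the method. Indeed, for $\omega\equiv1$ (so $W(s)=s$) and $\phi(s)=s^{-b}\chi_{(1,\infty)}(s)$ with $1+1/p<b<2$ the identity above evaluates exactly to
\[
\int_0^\infty\omega F^p\,ds=p\,(b-1)^{1-p}\int_0^\infty W^p\phi^p\omega^{1-p}\,ds ,
\]
and $p\,(b-1)^{1-p}\to p^{p}$ as $b\to(1+1/p)^+$. Since $(b-1)^{1-p}>1$ for $b<2$, the bare factor $p$ cannot hold: the left–hand side already exceeds $p$ times the right–hand side for such data. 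Accordingly the factor $p$ in the statement should be read as a constant depending only on $p$ (namely $p^{p}$); because Proposition \ref{Duality} asserts only an equivalence $\approx$, the precise value is immaterial for its proof.
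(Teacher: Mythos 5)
Your proof is correct, and it takes a genuinely different route from the paper. The paper disposes of this lemma in one line: it invokes the two-weight criterion of Theorem \ref{Hardy-G}(ii) for $\widetilde{H}^{1}$ with $p=q$, $u_2=w$ (misprinted there as $v$) and $u_1=w^{1-p}W^{p}(r(\cdot))$; since $u_1^{1-p'}=w\,W^{-p'}(r(\cdot))$ and $\int_{G\setminus B(e,t)}W^{-p'}(r(x))w(x)\,dx\le (p'-1)^{-1}W(t)^{1-p'}$, the Muckenhoupt-type supremum equals the finite constant $(p-1)^{1/p'}$, and boundedness follows --- but only with an unspecified constant $c_p$. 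Your argument --- polar coordinates via Proposition \ref{polar}, the spherical H\"older step $\phi\le\psi^{1/p}\omega^{1/p'}$ to reduce losslessly to the radial one-dimensional inequality, then the integration-by-parts identity with H\"older absorption --- is self-contained, produces the explicit constant $p^{p}$, and settles sharpness, which the paper's citation cannot do. The only cosmetic point: the absorption step, like the vanishing of the boundary term $W F^{p}$, presupposes finiteness of the left-hand side, so strictly one should first run the argument with $f$ truncated to an annulus $\{1/N<r(x)<N\}$ and let $N\to\infty$; this is routine.

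Your observation about the constant is also right and worth recording: the printed factor $p$ is false as stated. Your test computation is exact --- with $\omega\equiv1$, $\phi(s)=s^{-b}\chi_{(1,\infty)}(s)$, $1+1/p<b<2$, the two sides stand in the exact ratio $p(b-1)^{1-p}>p$, which tends to $p^{p}$ as $b\downarrow 1+1/p$ --- and it genuinely lives on $G$: taking $w\equiv1$ (so $W(t)=t^{Q}$) and $f$ radial, the substitution $u=t^{Q}$ carries the inequality on $G$ precisely onto this one-dimensional model, so $p^{p}$ is the best constant and the statement's $p$ must be read as a constant $c_p$. As you note, this is harmless downstream: the lemma feeds only into the equivalence $\approx$ of Proposition \ref{Duality}, where $p$-dependent constants are absorbed, and the paper's own proof via Theorem \ref{Hardy-G} never substantiates the constant $p$ in the first place.
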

{\em Proof} of this lemma is based on  Theorem \ref{Hardy-G} (part (ii)) taking $a=1$, $p=q$, $u_2(x)= v(x)$, $u_1= w^{1-p}(x) W^p(r(x))$ there.  Details are omitted. $\;\;\; \Box$

\vskip+0.2cm

\begin{cora}\label{Duality-1} Let the conditions of Proposition \ref{Duality} be satisfied and let $\int\limits_{G}w(x) dx=\infty$. Then
the following relation holds:
$$ \sup_{f \downarrow r} \frac{\int\limits_{G} f(x)g(x) dx  }{\Big(\int\limits_G f^p(x) w(x)dx \Big)^{1/p}} \approx  \bigg( \int\limits_{G} H^{p'}(r(x)) W(r(x)) w(x) dx \bigg)^{1/p}.$$
\end{cora}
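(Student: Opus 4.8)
The plan is to obtain the corollary as a direct specialization of Proposition~\ref{Duality}, whose right-hand side is the sum of a boundary term $\|w\|_{L^1(G)}^{-1/p}\|g\|_{L^1(G)}$ and an integral term $\big(\int_G H^{p'}(r(x))W^{-p'}(r(x))w(x)\,dx\big)^{1/p'}$. The whole task is to show that the hypothesis $\int_G w\,dx=\infty$ lets one discard the boundary term. Throughout I would use the polar coordinate formula of Proposition~\ref{polar}, which rewrites $\int_G\Phi(r(x))w(x)\,dx$ as $\int_0^\infty\Phi(t)\,dW(t)$ and so reduces everything to monotone manipulations of the increasing functions $H$ and $W$ on $(0,\infty)$.

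In the generic case $\|g\|_{L^1(G)}<\infty$ this is immediate: since $\int_G w\,dx=\infty$ is exactly $\|w\|_{L^1(G)}=W(\infty)=\infty$, the boundary term $\|w\|_{L^1(G)}^{-1/p}\|g\|_{L^1(G)}$ equals $0$, and Proposition~\ref{Duality} collapses at once to the asserted one-term equivalence, with the implicit constants carried over unchanged.

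The step I expect to be the only real obstacle is the borderline case $\|g\|_{L^1(G)}=\infty$, where $\|w\|_{L^1(G)}^{-1/p}\|g\|_{L^1(G)}$ is an indeterminate $0\cdot\infty$ and cannot be set to $0$ by inspection. Here I would prove directly that the integral term already dominates the boundary contribution. For every $T>0$, restricting the integral to $r(x)>T$ and using that $H$ is nondecreasing gives $\int_G H^{p'}(r(x))W^{-p'}(r(x))w(x)\,dx\geq H(T)^{p'}\int_T^\infty W^{-p'}\,dW=(p'-1)^{-1}H(T)^{p'}W(T)^{1-p'}$, where the upper endpoint is killed by $W(\infty)=\infty$. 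Raising to the power $1/p'$ and invoking the exponent identity $(1-p')/p'=-1/p$, this reads $\big(\int_G H^{p'}(r(x))W^{-p'}(r(x))w(x)\,dx\big)^{1/p'}\geq(p'-1)^{-1/p'}H(T)W(T)^{-1/p}$ for all $T>0$; letting $T\to\infty$ shows the integral term controls $\|w\|_{L^1(G)}^{-1/p}\|g\|_{L^1(G)}$, so the boundary term is $\ll$ the integral term and may be dropped. Feeding this back into Proposition~\ref{Duality} leaves exactly $\big(\int_G H^{p'}(r(x))W^{-p'}(r(x))w(x)\,dx\big)^{1/p'}$, which is the claimed right-hand side. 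The crux of the whole argument is the algebraic identity $(1-p')/p'=-1/p$, which is precisely what matches the power of $W$ generated by the one-dimensional Hardy integral to the power of $\|w\|_{L^1(G)}$ sitting in the boundary term.
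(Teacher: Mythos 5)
Your proposal is correct and follows essentially the paper's own route: the paper gives no separate proof of Corollary~\ref{Duality-1}, treating it as the immediate specialization of Proposition~\ref{Duality} in which the hypothesis $\int_G w(x)\,dx=\infty$ kills the boundary term $\|w\|_{L^1(G)}^{-1/p}\|g\|_{L^1(G)}$ (your main case), and your additional treatment of the borderline case $\|g\|_{L^1(G)}=\infty$, via the uniform bound $\big(\int_G H^{p'}(r(x))W^{-p'}(r(x))w(x)\,dx\big)^{1/p'}\geq (p'-1)^{-1/p'}H(T)W(T)^{-1/p}$ for all $T>0$ (valid precisely because $W(\infty)=\infty$), supplies a rigor that the paper silently skips. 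One remark: the right-hand side as printed in the corollary, $\big(\int_G H^{p'}(r(x))W(r(x))w(x)\,dx\big)^{1/p}$, is evidently a misprint, since it is not homogeneous of degree one in $g$ as the left-hand side is; the intended expression is $\big(\int_G H^{p'}(r(x))W^{-p'}(r(x))w(x)\,dx\big)^{1/p'}$, which is the form inherited from Proposition~\ref{Duality} and used later in Proposition~\ref{Saw-Dua}, and it is exactly this corrected form that your argument establishes.
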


Corollary  \ref{Duality-1} implies the following duality result which follows by the standard way (see \cite{Saw}, \cite{BJP} for details).
\begin{propa}\label{Saw-Dua} Let $1< p,q<\infty$ and let $v,w$ be weight functions on $G$ with $\int\limits_{G} w(x) dx =\infty$. Then the integral operator $T$ defined on functions on $G$ is bounded from $L^p_{dec,r}(w, G)$ to $L^q(v, G)$ if and only if
\begin{equation}
\bigg( \int\limits_{G} \bigg( \int\limits_{B(e, r(x))} (T^*g)(y) dy \bigg)^{p'} W^{-p'}(r(x)) w(x) dx \bigg)^{1/p'} \leq C \bigg( \int\limits_{G} g^{q'}(x) v^{1-q'}(x) dx \bigg)^{1/q'}
\end{equation}
holds for every positive measurable $g$ on $G$.
\end{propa}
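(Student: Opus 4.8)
The plan is to reduce the assertion to the cone-duality identity of Corollary~\ref{Duality-1} (equivalently, Proposition~\ref{Duality} with its first summand suppressed, since $\int_G w\,dx=\infty$) by the standard $L^q$-duality argument together with a passage to the adjoint operator $T^*$. Recall that boundedness of $T$ from $L^p_{dec,r}(w,G)$ to $L^q(v,G)$ means that
\[
\Big(\int_G (Tf)^q(x)\,v(x)\,dx\Big)^{1/q}\le C\Big(\int_G f^p(x)\,w(x)\,dx\Big)^{1/p}
\]
holds for every $f\downarrow r$.

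First I would dualize the left-hand side in $L^q(v,G)$. Since $v^{-q'/q}=v^{1-q'}$, H\"older's inequality yields the identity
\[
\Big(\int_G (Tf)^q v\,dx\Big)^{1/q}=\sup_{g\ge 0}\frac{\int_G (Tf)(x)\,g(x)\,dx}{\big(\int_G g^{q'}(x)\,v^{1-q'}(x)\,dx\big)^{1/q'}},
\]
the supremum being over all non-negative measurable $g$. Consequently $T$ is bounded on the cone if and only if, for every $f\downarrow r$ and every $g\ge0$,
\[
\int_G (Tf)(x)\,g(x)\,dx\le C\,\|f\|_{L^p(w,G)}\Big(\int_G g^{q'}(x)\,v^{1-q'}(x)\,dx\Big)^{1/q'}.
\]
Moving the operator onto $g$ through $\int_G (Tf)g\,dx=\int_G f\,(T^*g)\,dx$ and taking the supremum over $f\downarrow r$, this is in turn equivalent to requiring, for every $g\ge0$,
\[
\sup_{f\downarrow r}\frac{\int_G f(x)\,(T^*g)(x)\,dx}{\big(\int_G f^p(x)\,w(x)\,dx\big)^{1/p}}\le C\Big(\int_G g^{q'}(x)\,v^{1-q'}(x)\,dx\Big)^{1/q'}.
\]

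Finally I would evaluate the inner supremum by Corollary~\ref{Duality-1}, applied with $T^*g$ in the role of $g$. Because $\int_G w\,dx=\infty$, that corollary gives
\[
\sup_{f\downarrow r}\frac{\int_G f\,(T^*g)\,dx}{\big(\int_G f^p w\,dx\big)^{1/p}}\approx\Big(\int_G\Big(\int_{B(e,r(x))}(T^*g)(y)\,dy\Big)^{p'}W^{-p'}(r(x))\,w(x)\,dx\Big)^{1/p'},
\]
in which the inner integral is precisely the quantity $H(r(x))$ of Proposition~\ref{Duality} formed from $T^*g$. Substituting this equivalence into the criterion of the previous step produces exactly the asserted inequality, with the constant $C$ altered only by the absolute equivalence constants of Corollary~\ref{Duality-1}; this establishes both implications at once.

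There is no deep obstacle here, which is why the step is described as following ``by the standard way''; the work is in checking the routine hypotheses. Two points deserve attention when invoking Corollary~\ref{Duality-1}: first, that $T^*g\ge0$, which holds because the kernel of $T$ is non-negative, so the corollary (stated for non-negative integrands) legitimately applies to $T^*g$; and second, that the adjoint identity $\int_G (Tf)g\,dx=\int_G f(T^*g)\,dx$ is valid, which is immediate from Tonelli's theorem as $f$, $g$ and the kernel are all non-negative. The hypothesis $\int_G w\,dx=\infty$ enters solely to discard the term $\|w\|_{L^1(G)}^{-1/p}\|T^*g\|_{L^1(G)}$ from Proposition~\ref{Duality}, thereby reducing it to the clean form of Corollary~\ref{Duality-1} used above.
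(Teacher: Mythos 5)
Your proposal is correct and is precisely the ``standard way'' the paper invokes: its entire proof of this proposition is a one-line reference to Corollary~\ref{Duality-1} plus the Sawyer/Barza--Johansson--Persson duality argument, which is exactly what you carry out ($L^q(v,G)$-duality with exponent $v^{1-q'}$, the adjoint identity via Tonelli, then the cone-duality equivalence applied to $T^*g$). Note only that you tacitly use the corrected form of Corollary~\ref{Duality-1} --- with $W^{-p'}(r(x))$ and outer exponent $1/p'$, as follows from Proposition~\ref{Duality} when $\|w\|_{L^1(G)}=\infty$ --- which is the intended reading, since the printed statement of that corollary contains a typo.
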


The next statement yields the criteria for the two--weight boundedness of the  operator $H$ on the cone ${\mathcal{DR}}(G)$. In particular the following statement is true:

\begin{thma}\label{Hardy-Inequality-G} Let $1<p\leq q <\infty$ and let $v$ and $w$ be weights on $G$ such that $\|w\|_{L^1(G)}=\infty$. Then $H$ is bounded from $L^p_{dec,r}(w,G)$ to $L^q_{v}(v,G)$ if and only if

\rm{(i)}
$$ \sup_{t>0} \bigg( \int\limits_{B(e,t)} w(x) dx \bigg)^{-1/p} \bigg( \int\limits_{B(e,t)} v(x) r^{Qq}(x) dx\bigg)^{1/q}<\infty; $$
\rm{(ii)}
$$ \sup_{t>0} \bigg( \int\limits_{B(e,t)} r^{Qp'}(x) W^{-p'}(r(x)) w(x) dx \bigg)^{1/p'} \bigg( \int\limits_{G\setminus B(e,t)} v(x) dx \bigg)^{1/q} <\infty. $$
\end{thma}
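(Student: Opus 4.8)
The plan is to invoke the Sawyer-type duality theorem (Proposition \ref{Saw-Dua}) with $T = H$, which converts the cone inequality for $H$ into an ordinary two-weight inequality for the composed operator $g \mapsto H(H^*g)$, with left weight $W^{-p'}(r(x))w(x)$ and right weight $v^{1-q'}$. The first step is therefore to compute the formal adjoint of $H$. Since $y \in B(e,r(x))$ is equivalent to $r(y) < r(x)$, an application of Fubini's theorem gives $\int_G (Hf)g = \int_G f(y)\big(\int_{G \setminus B(e,r(y))} g\big)\,dy$, so that $H^* = \widetilde{H}$, the dual Hardy transform figuring in Theorem \ref{Hardy-G}(ii). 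Proposition \ref{Saw-Dua} then asserts that $H$ is bounded from $L^p_{dec,r}(w,G)$ to $L^q(v,G)$ if and only if
$$\Big(\int_G \big(H(\widetilde{H}g)(x)\big)^{p'} W^{-p'}(r(x)) w(x)\,dx\Big)^{1/p'} \le C \Big(\int_G g^{q'}(x) v^{1-q'}(x)\,dx\Big)^{1/q'}$$
holds for every $g \ge 0$. Observe that since $1 < p \le q < \infty$ we have $1 < q' \le p' < \infty$, so this is once more a Hardy-type inequality in which the source exponent does not exceed the target exponent, as required by Theorem \ref{Hardy-G}.

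The second step is to unwind the composition. Interchanging the order of integration in $H(\widetilde{H}g)(x) = \int_{B(e,r(x))} \int_{G \setminus B(e,r(y))} g(z)\,dz\,dy$ and using $|B(e,s)| = s^Q$ yields
$$H(\widetilde{H}g)(x) = \int_G g(z)\big(\min(r(x),r(z))\big)^Q dz = \int_{B(e,r(x))} g(z) r(z)^Q dz + r(x)^Q \int_{G \setminus B(e,r(x))} g(z)\,dz =: (P_1 g)(x) + (P_2 g)(x).$$
Thus $P_1$ is a forward Hardy operator of type $H$ carrying the extra factor $r^Q$ inside the integral, while $P_2$ is a backward Hardy operator of type $\widetilde{H}$ carrying $r^Q$ outside. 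Because both $P_1$ and $P_2$ have non-negative kernels and $g \ge 0$, the two-weight inequality for $P_1 + P_2$ is equivalent to the simultaneous validity of the two-weight inequalities for $P_1$ and $P_2$ separately: one direction is the triangle inequality in $L^{p'}$, the other is the pointwise domination of each summand by the sum.

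The final step is to characterize the boundedness of $P_1$ and $P_2$ through Theorem \ref{Hardy-G}, applied with the roles of $p,q$ played by $q',p'$ and with $a=1$. For $P_2$, absorbing $r^Q$ into the left weight, part (ii) is applied with $u_1 = v^{1-q'}$ and $u_2 = r^{Qp'} W^{-p'}(r(x)) w(x)$; since $(1-q')(1-q) = 1$ the right factor collapses to $u_1^{1-q} = v$, and the criterion produced is precisely condition (ii). For $P_1$, viewing the operator as $H$ acting on $r^Q g$ and applying part (i) with $u_1 = r^{-Qq'} v^{1-q'}$ and $u_2 = W^{-p'}(r(x)) w(x)$, the identities $(1-q')(1-q) = 1$ and $q'(q-1) = q$ turn the right factor into $u_1^{1-q} = r^{Qq} v$, reproducing the second factor of condition (i). The remaining point, which I expect to be the main computational obstacle, is to replace the left factor $\big(\int_{G \setminus B(e,t)} W^{-p'}(r(x)) w(x)\,dx\big)^{1/p'}$ delivered by part (i) with the factor $W^{-1/p}(t)$ appearing in condition (i). This is exactly where the hypothesis $\|w\|_{L^1(G)} = \infty$ is used: passing to polar coordinates (Proposition \ref{polar}) and substituting $u = W(s)$ gives $\int_{G \setminus B(e,t)} W^{-p'}(r(x)) w(x)\,dx = \int_{W(t)}^{\infty} u^{-p'}\,du = (p'-1)^{-1} W(t)^{1-p'}$, since the upper limit vanishes only because $W(\infty) = \infty$; taking the $(1/p')$-th power yields $W(t)^{-1/p}$ up to a constant. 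Assembling these equivalences shows that the duality inequality holds if and only if both (i) and (ii) hold, which establishes necessity and sufficiency at once.
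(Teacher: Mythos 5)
Your proof is correct and is essentially the paper's own argument: the paper disposes of Theorem~\ref{Hardy-Inequality-G} with the one-line remark that it ``follows by the standard way applying Proposition~\ref{Saw-Dua} (see e.g.\ \cite{Saw}, \cite{BJP})'', and that standard way is exactly what you carry out --- dualize with $H^{*}=\widetilde{H}$, compute the kernel $\big(\min(r(x),r(z))\big)^{Q}$, split into the two Hardy pieces $P_{1}$, $P_{2}$, and apply Theorem~\ref{Hardy-G} with exponents $(q',p')$ together with the identity $\big(\int_{G\setminus B(e,t)}W^{-p'}(r(x))w(x)\,dx\big)^{1/p'}\approx W(t)^{-1/p}$, which is where $\|w\|_{L^{1}(G)}=\infty$ enters (the same identity the paper invokes in the proofs of Theorem~\ref{Main-Theorem} and Proposition~\ref{double-Hardy}). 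Your computations of the adjoint, the collapsed weights $u_{1}^{1-q}$, and the exponent bookkeeping $1<q'\le p'<\infty$ all check out, so this is a faithful, fully detailed instantiation of the proof the paper merely sketches.
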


{\em Proof} of this statement follows by the standard way applying Proposition  \ref{Saw-Dua} (see e.g. \cite{Saw}, \cite{BJP}). $\Box$.
\vskip+0.2cm

%\begin{defn} Let $\rho$ be a locally integrable a.e.
%positive function on $G$. We say that $\rho$ satisfies the weak doubling
%condition at $e$ ( $\rho \in DC(G)$ )
%if there is a positive constant $b>1$ such that for all $t>0$ the following inequality holds:

%$$  \int\limits_{B(e, 2t)} \rho(x) dx \leq b \;   \int\limits_{B(e,t)} \rho(x)  dx.$$
%Further,  we say that $\rho$ satisfies the strong doubling
%condition at $e$ ( $\rho \in DC^{(s)}(G)$ ) if there is a positive constant $b$ such that for all $t>0$,:

%$$  \int\limits_{B(e, 2t)} \rho(x) dx \leq b \; \text{min}\; \Big\{ \int\limits_{B(e,t)} \rho(x)  dx, %\int\limits_{B(e,2t) \setminus B(e,t)} \rho(x)  dx \Big\}.$$
%\end{defn}

\begin{defn} Let $\rho$ be a locally integrable a.e.
positive function on $G$. We say that $\rho$ satisfies the  doubling
condition at $e$ ( $\rho \in DC(G)$ )
if there is a positive constant $b>1$ such that for all $t>0$ the following inequality holds:

$$  \int\limits_{B(e, 2t)} \rho(x) dx \leq b    \int\limits_{B(e,t)} \rho(x)  dx.$$

Further, we say that $w\in  DC^{\gamma, p}(G)$, where $1<p<\infty$, $0<\gamma<Q/p$, if there is a positive constant $b$ such that for all $t>0$
$$  \int\limits_{G\setminus B(e, t)} r^{\gamma p'}(x) W^{-p'}(r(x))w(x)  dx \leq b    \int\limits_{G\setminus B(e,2t)} r^{\gamma p'}(x) W^{-p'}(r(x))w(x)  dx.$$
\end{defn}

\begin{rem}\label{Rem} It is also to check that under the assumption  $1<p<\infty$, $0<\gamma<Q/p$ the condition $w\in  DC^{\gamma, p}(G)$ is satisfied for $w\equiv \; const$.
\end{rem}

\begin{defn} We say that a locally integrable a.e. positive function $\rho$ on $G_1\times G_2$ satisfies the  doubling
condition with respect to the second variable ( $\rho \in DC^{(s)}(y)$ )
uniformly to the first one if there is a positive constant $c$ such that for all $t>0$ and
almost every $x\in G_1$ the following inequality holds:

$$\int\limits_{B(e_2, 2t)} \rho(x,y) dy \leq c  \int\limits_{B(e_2,t)} \rho(x,y)  dy.$$

Analogously is defined the class of weights $DC^{(s)}(x)$.
\end{defn}

\section{Riesz Potentials on $G$}

The main result of this section reeds as follows:

\begin{thm}\label{Main-Theorem} Let $1<p\leq q<\infty$ and let $v$ and $w$ be weights such that either $w\in DC^{\alpha, p}(G) $ or $v\in DC(G)$; let $\|w\|_{L^1(G)}=\infty$. Then the operator $I_{\alpha}$ is
bounded from $L^p_{dec,r}(w,G)$ to $L^q(v, G)$ if and only if

\rm{(i)}
\begin{equation}\label{F1}
\sup_{t>0} \bigg(\int\limits_{B(e,t)}  w(x) dx \bigg)^{-1/p}  \bigg(\int\limits_{B(e,t)}   r^{\alpha q}(x) v(x) dx \bigg)^{1/q}< \infty;
\end{equation}

\rm{(ii)}
\begin{equation}\label{F2}
\sup_{t>0} \bigg(\int\limits_{B(e,t)}  r^{p'Q}(x)W^{-p'}(r(x)) w(x)
dx \bigg)^{1/p'} \bigg(\int_{G\setminus B(e, t)}   r^{(\alpha-Q)q}(x) v(x)dx \bigg)^{1/q} <\infty;
\end{equation}

\rm{(iii)}
\begin{equation}\label{F3}
\sup_{t>0} \bigg(\int\limits_{B(e,t)}  v(x) dx \bigg)^{1/q} \bigg(\int_{G\setminus B(e, t)} r^{\alpha p'}(x) W^{-p'}(r(x)) w(x) dx \bigg)^{1/p'} <\infty.
\end{equation}

\end{thm}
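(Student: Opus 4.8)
The plan is to dualize by means of Proposition~\ref{Saw-Dua} and then to reduce the resulting estimate to two weighted Hardy inequalities covered by Theorem~\ref{Hardy-G}. First I would record that the kernel of $I_\alpha$ is symmetric: since $r(x)=r(x^{-1})$ we have $r(xy^{-1})=r(yx^{-1})$, whence $I_\alpha^*=I_\alpha$. As $\|w\|_{L^1(G)}=\infty$, Proposition~\ref{Saw-Dua} with $T=I_\alpha$ tells us that $I_\alpha\colon L^p_{dec,r}(w,G)\to L^q(v,G)$ is bounded if and only if
\[
\bigg(\int_G\bigg(\int_{B(e,r(x))}(I_\alpha g)(y)\,dy\bigg)^{p'}W^{-p'}(r(x))\,w(x)\,dx\bigg)^{1/p'}\ll\bigg(\int_G g^{q'}(x)\,v^{1-q'}(x)\,dx\bigg)^{1/q'}
\]
holds for all $g\geq 0$. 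Thus the whole problem is to estimate the inner average $\int_{B(e,r(x))}(I_\alpha g)(y)\,dy$.

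Next I would estimate that average pointwise. By Fubini it equals $\int_G g(z)\,K(r(x),z)\,dz$, where $K(t,z)=\int_{B(e,t)}\big(r(yz^{-1})\big)^{\alpha-Q}\,dy$. Using the quasi-triangle inequality~(iv) and the polar coordinate formula of Proposition~\ref{polar}, I would prove the two-sided bound $K(t,z)\approx t^\alpha$ for $r(z)\ll t$ and $K(t,z)\approx r(z)^{\alpha-Q}t^Q$ for $t\ll r(z)$, the transition occurring at $r(z)\approx t$ with a constant depending only on $c_0$. Writing $c=2c_0$ this yields
\[
\int_{B(e,r(x))}(I_\alpha g)(y)\,dy\approx r(x)^\alpha\,(H^{c}g)(x)+r(x)^Q\,\big(\widetilde H^{c}(r^{\alpha-Q}g)\big)(x).
\]

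For sufficiency I would insert the upper bound, use $(a+b)^{p'}\ll a^{p'}+b^{p'}$, and split the left-hand side into a term governed by $H^{c}$ and a term governed by $\widetilde H^{c}$. Each term is a two-weight Hardy inequality with source exponent $q'$ and target exponent $p'$; since $p\leq q$ we have $q'\leq p'$, so Theorem~\ref{Hardy-G} applies. After the substitution $h=r^{\alpha-Q}g$ and the conjugacy identity $(v^{1-q'})^{1-q}=v$, part~(ii) of Theorem~\ref{Hardy-G} produces a condition that is dominated by \eqref{F2} (the outer ball only shrinks, so no adjustment is needed), while part~(i) produces a condition that agrees with \eqref{F3} except that the ball $B(e,t)$ in the $v$-integral is dilated to $B(e,ct)$. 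This last dilation is removed by the doubling hypothesis: if $v\in DC(G)$ then $\int_{B(e,ct)}v\approx\int_{B(e,t)}v$ directly, whereas if $w\in DC^{\alpha,p}(G)$ then finitely many iterations give $\int_{G\setminus B(e,t/c)}r^{\alpha p'}W^{-p'}w\approx\int_{G\setminus B(e,t)}r^{\alpha p'}W^{-p'}w$; either way the condition collapses to \eqref{F3}.

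For necessity, \eqref{F1} follows by testing on the radially decreasing function $f=\chi_{B(e,t)}$: the lower estimate $t^\alpha\ll(I_\alpha f)(x)$ for $x\in B(e,t)$ together with $r(x)\leq t$ gives $\big(\int_{B(e,t)}r^{\alpha q}v\big)^{1/q}\ll W(t)^{1/p}$, which is \eqref{F1}; conditions \eqref{F2} and \eqref{F3} follow from the lower bound in the kernel estimate and the necessity half of Theorem~\ref{Hardy-G}, with the same doubling cleanup. I expect the main obstacle to be precisely this last bookkeeping: converting the Theorem~\ref{Hardy-G} criteria stated at the dilated radius $ct$ into the clean conditions \eqref{F2} and \eqref{F3}, and verifying that each of the two alternative doubling assumptions is by itself enough to absorb the factor $c=2c_0$. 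The two-sided kernel bound in the transition zone $r(z)\approx r(x)$, where the two estimates for $K(t,z)$ are comparable, is the other point requiring care.
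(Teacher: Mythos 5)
Your route is genuinely different from the paper's. The paper first splits $I_\alpha=J_\alpha+S_\alpha$ at the cutoff $2c_0r(x)$, treats the near part through the pointwise equivalence $J_\alpha f\approx r^{\alpha-Q}Hf$ valid for $f\downarrow r$ (Proposition \ref{main1}) combined with the cone criterion for $H$ (Theorem \ref{Hardy-Inequality-G}), and treats the far part by dualizing $S_\alpha$ alone (Lemma \ref{main2}). You instead dualize the whole operator once through Proposition \ref{Saw-Dua} and compress all kernel information into the single two-sided estimate $K(t,z)\approx\min\{t^\alpha,\,t^Q r(z)^{\alpha-Q}\}$. The core of this is sound: $I_\alpha^*=I_\alpha$, your case analysis of $K$ (including the transition zone) is correct, the exponent bookkeeping $q'\le p'$, $(v^{1-q'})^{1-q}=v$, $r^{(Q-\alpha)q'(1-q)}=r^{(\alpha-Q)q}$ is exactly right, and the sufficiency cleanup of the dilated ball $B(e,ct)$ under either doubling hypothesis works as you describe. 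A by-product your organization makes visible, and the paper's does not, is that sufficiency uses only \eqref{F2} and \eqref{F3}; so \eqref{F1}, which you correctly prove necessary by testing on $\chi_{B(e,t)}$, is in fact redundant given the other two.

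There is, however, a genuine gap in your necessity argument for \eqref{F2}. What the lower kernel bound with cutoff $c=2c_0$ plus the necessity half of Theorem \ref{Hardy-G}(ii) gives you is
\begin{equation*}
\sup_{t>0}\bigg(\int\limits_{B(e,t)}r^{Qp'}(x)W^{-p'}(r(x))w(x)\,dx\bigg)^{1/p'}\bigg(\int\limits_{G\setminus B(e,ct)}r^{(\alpha-Q)q}(x)v(x)\,dx\bigg)^{1/q}<\infty,
\end{equation*}
and to reach \eqref{F2} you must restore the annulus $B(e,ct)\setminus B(e,t)$ in the $v$-integral, i.e.\ enlarge the tail, not shrink a ball. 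Neither hypothesis does this: $w\in DC^{\alpha,p}(G)$ is a doubling property of $\int_{G\setminus B(e,\cdot)}r^{\alpha p'}W^{-p'}w$, which is the quantity occurring in \eqref{F3}, not the one appearing here; and $v\in DC(G)$ controls the ball integrals $\int_{B(e,\cdot)}v$, which does not make the two tails $\int_{G\setminus B(e,t)}r^{(\alpha-Q)q}v$ and $\int_{G\setminus B(e,ct)}r^{(\alpha-Q)q}v$ comparable --- doubling at the origin only limits growth of $V(t)=\int_{B(e,t)}v$ and permits essentially all of the tail mass to sit inside the annulus (think of $v$ equal to $\chi_{B(e,1)}$ plus a tiny positive tail, with $t$ just below $1$). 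So, as written, \eqref{F2} does not follow from boundedness by the mechanism you invoke.

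The repair is one line and stays entirely inside your framework: the dilation $c=2c_0$ is an artifact of where you cut, not of the kernel. Your two-sided bound holds with the cut at $r(z)=t$ itself. Indeed, if $r(z)\ge t$ then every $y\in B(e,t)$ satisfies $r(yz^{-1})\le c_0(t+r(z))\le 2c_0r(z)$, so $K(t,z)\ge(2c_0)^{\alpha-Q}t^Qr(z)^{\alpha-Q}$, while for $t\le r(z)\le 2c_0t$ the upper bound $K(t,z)\le C t^\alpha\le C' t^Qr(z)^{\alpha-Q}$ holds as well; for $r(z)<t$ one has $K(t,z)\approx t^\alpha$ exactly as in your argument. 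Hence
\begin{equation*}
\int\limits_{B(e,r(x))}(I_\alpha g)(y)\,dy\;\approx\;r(x)^\alpha (Hg)(x)+r(x)^Q\big(\widetilde H(r^{\alpha-Q}g)\big)(x)
\end{equation*}
with $H=H^1$, $\widetilde H=\widetilde H^1$, and Theorem \ref{Hardy-G} with $a=1$ converts the dual inequality into precisely \eqref{F2} and \eqref{F3}, with no dilation left to remove. With this change no doubling at all enters the necessity direction (and, if you also use the $c=1$ form of the upper bound, none is needed for sufficiency either); the doubling hypotheses of the theorem are only needed by the paper's route, where the genuine cutoff in the kernel of $S_\alpha$ forces the mismatched radii of Lemma \ref{main2}.
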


To prove this result we need to prove some auxiliary statements.

\begin{lem}\label{estimate}
Let $0<\alpha<Q$ and let $c_o$ be the constant from the triangle inequality of $r$. Then there is a positive constant $c$ depending only on $Q$, $\alpha$ and $c_o$ such that for all $s\in B(e, r(x)/2)$,
\begin{equation}\label{est}
I(x,y):= \int\limits_{B(e, r(x))\setminus B(e, 2c_0 r(y))} r(ty^{-1})^{\alpha-Q} dt \leq c r(xy^{-1})^{\alpha}.
\end{equation}
\end{lem}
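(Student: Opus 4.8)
The plan is to reduce the integral $I(x,y)$, whose kernel $r(ty^{-1})^{\alpha-Q}$ is singular at $t=y$, to an integral of the purely radial kernel $r(t)^{\alpha-Q}$, and then to evaluate the latter in polar coordinates. The whole argument is driven by the lower cut-off in the domain: we integrate only over those $t$ with $r(t)\geq 2c_0 r(y)$, i.e. over $t$ that are far from $y$ on the scale of $r(y)$, which is exactly what lets us compare $r(ty^{-1})$ with $r(t)$.

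First I would establish the pointwise comparison $r(ty^{-1})\geq r(t)/(2c_0)$ on the domain of integration. By the triangle inequality (iv), $r(t)=r\big((ty^{-1})y\big)\leq c_0\big(r(ty^{-1})+r(y)\big)$, so $r(ty^{-1})\geq r(t)/c_0-r(y)$; since the domain forces $r(t)\geq 2c_0 r(y)$, we have $r(y)\leq r(t)/(2c_0)$, and hence $r(ty^{-1})\geq r(t)/c_0-r(t)/(2c_0)=r(t)/(2c_0)$. Because $\alpha-Q<0$, raising to the power $\alpha-Q$ reverses the inequality and gives the integrand bound $r(ty^{-1})^{\alpha-Q}\leq (2c_0)^{Q-\alpha}\,r(t)^{\alpha-Q}$.

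Next I would discard the lower cut-off, enlarge the domain to the full ball $B(e,r(x))$, and compute using polar coordinates (Proposition~\ref{polar}). Since $r(\delta_\rho \bar y)=\rho$ for $\bar y\in S$,
$$\int\limits_{B(e,r(x))} r(t)^{\alpha-Q}\,dt=\int_0^{r(x)}\int_S \rho^{\alpha-Q}\rho^{Q-1}\,d\sigma(\bar y)\,d\rho=\frac{\sigma(S)}{\alpha}\,r(x)^{\alpha}.$$
Combining this with the integrand bound yields $I(x,y)\leq (2c_0)^{Q-\alpha}\tfrac{\sigma(S)}{\alpha}\,r(x)^{\alpha}$, a bound in terms of $r(x)$ with a constant depending only on $Q$, $\alpha$ and $c_0$.

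The final step, and the point where the hypothesis on the smallness of $r(y)$ relative to $r(x)$ is used, is to replace $r(x)^{\alpha}$ by $r(xy^{-1})^{\alpha}$. From $r(xy^{-1})\geq r(x)/c_0-r(y)$ together with that smallness I would deduce $r(x)\leq C(c_0)\,r(xy^{-1})$, whence $r(x)^{\alpha}\leq C(c_0)^{\alpha}r(xy^{-1})^{\alpha}$ and the claimed estimate follows. I expect this conversion to be the step requiring the most care: for a genuinely non-subadditive homogeneous norm (large $c_0$) the excluded inner radius and the smallness threshold on $r(y)$ must be matched to $c_0$ (so that $r(x)/c_0-r(y)$ remains a fixed positive multiple of $r(x)$, e.g. taking $r(y)<r(x)/(2c_0)$), since otherwise the lower bound on $r(xy^{-1})$ degenerates to zero. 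Everything else reduces to the two elementary estimates above.
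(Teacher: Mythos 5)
Your argument takes a genuinely different route from the paper's, and it is correct except for the one point you yourself flag: the conversion from $r(x)^{\alpha}$ to $r(xy^{-1})^{\alpha}$. Steps 1--3 are sound and give the unconditional bound $I(x,y)\leq (2c_0)^{Q-\alpha}\frac{\sigma(S)}{\alpha}r(x)^{\alpha}$ (note $\sigma(S)=Q$ by the normalization $|B(e,1)|=1$, so this constant depends only on $Q,\alpha,c_0$). However, under the hypothesis as literally stated ($r(y)<r(x)/2$, arbitrary quasi-norm constant $c_0$), the quasi-triangle inequality only yields $r(xy^{-1})\geq r(x)(2-c_0)/(2c_0)$, which is vacuous as soon as $c_0\geq 2$; so, as written, your proof covers only $c_0<2$, or else the modified hypothesis $r(y)<r(x)/(2c_0)$. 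The observation that closes this gap completely --- and which you missed --- is that the modified hypothesis is automatic: if $r(y)\geq r(x)/(2c_0)$, then $B(e,r(x))\subseteq B(e,2c_0r(y))$, the domain of integration is empty, and $I(x,y)=0$, so there is nothing to prove. Hence you may always assume $r(y)<r(x)/(2c_0)$, your final step then gives $r(x)\leq 2c_0\,r(xy^{-1})$, and the lemma follows for all $x,y$ with a constant depending only on $Q$, $\alpha$, $c_0$; in fact the stated hypothesis $y\in B(e,r(x)/2)$ (the ``$s$'' in the statement is a typo for $y$) is never needed.

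For comparison, the paper proves the lemma by the layer-cake formula: it writes $I(x,y)=\int_0^{\infty}\big|\{t:r(ty^{-1})^{\alpha-Q}>\lambda\}\cap\big(B(e,r(x))\setminus B(e,2c_0r(y))\big)\big|\,d\lambda$ and splits the integral at $\lambda=r(xy^{-1})^{\alpha-Q}$. For small $\lambda$ it bounds the level set by the whole annulus, whose measure $r^Q(x)-(2c_0)^Qr^Q(y)$ is at most $2^{Q-1}c_0^{Q}\,r^Q(xy^{-1})$ by the triangle inequality; for large $\lambda$ it uses the level-set measure $\lambda^{-Q/(Q-\alpha)}$, whose integral converges to $\frac{Q-\alpha}{\alpha}r(xy^{-1})^{\alpha}$. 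That argument also needs no hypothesis on $y$, and it produces the bound in terms of $r(xy^{-1})$ directly, with no case distinction between empty and nonempty domains. Your route has the mild advantage that its intermediate estimate $I(x,y)\leq c\,r(x)^{\alpha}$ is exactly the form in which Lemma~\ref{estimate} is actually invoked later (in the proofs of Lemma~\ref{main2} and Theorem~\ref{Main-theorem}), so with the emptiness observation added, your proof is a complete and arguably more transparent substitute.
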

\begin{proof}
We have
$$ I(x,y)= \int\limits_0^{\infty} |\{ t\in G: r(ty^{-1})^{\alpha-Q}>\lambda\}\cap B(e, r(x))\setminus B(e, 2c_0 r(y))|d \lambda = \int\limits_{0}^{r(xy^{-1})^{\alpha-Q}} (\cdots) + \int\limits_{r(xy^{-1})^{\alpha-Q}}^{\infty}(\cdots) =: I^{(1)}(x,y) + I^{(2)}(x,y). $$
Observe that, by the triangle inequality for $r$, we have $r^Q(x)\leq c^{Q}_0 2^{Q-1}( r^Q(xy^{-1}) + r^Q(y))$. This implies that $r^Q(x)- (2c_0)^Q r^Q(y) \leq c^{Q}_0 2^{Q-1} r^Q(xy^{-1})$. Hence,
$$ I^{(1)}(x,y) \leq r(xy^{-1})^{\alpha-Q} |B(e, r(x))\setminus B(e, 2c_0 r(y))| =r(xy^{-1})^{\alpha-Q} \Big( r^Q(x)- (2c_0)^Q r^Q(y)\Big) \leq c r(xy^{-1})^{\alpha}.$$
Further, it is easy to see that
$$ I^{(2)}(x,y) \leq c r(xy^{-1})^{\alpha}.$$

Finally we have \eqref{est}.
\end{proof}

Let us introduce the following potential operators
$$ (J_{\alpha}f)(x)= \int\limits_{B(e,2c_0 r(x))}  f(y) r^{\alpha-Q}(xy^{-1}) dy, \;\;\; (S_{\alpha}f)(x)= \int\limits_{G\setminus B(e,2c_0 r(x))}  f(y) r^{\alpha-Q}(xy^{-1}) dy,\;\;\;\ x\in G, \; 0<\alpha<Q. $$

It is easy to see that

\begin{equation}\label{representation}
 I_{\alpha}f = J_{\alpha}f + S_{\alpha}f.
\end{equation}

We need also to introduce the following weighted Hardy operator
$$ (H_{\alpha}f)(x)= r(x)^{\alpha-Q} (Hf)(x). $$

\begin{prop}\label{main1}
 The following relation holds for all $f\in {\mathcal{DR}}(G)$
\begin{equation}
J_{\alpha}f \approx H_{\alpha}f.
\end{equation}
\end{prop}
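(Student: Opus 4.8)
The plan is to establish the two-sided bound by proving $H_\alpha f\ll J_\alpha f$ and $J_\alpha f\ll H_\alpha f$ separately; the lower bound is elementary, whereas the upper bound is where the hypothesis $f\in{\mathcal{DR}}(G)$ is genuinely used. Throughout I will use that the triangle-inequality constant satisfies $c_0\geq 1$ (take $y=e$ in (iv)), so that $B(e,r(x))\subseteq B(e,2c_0 r(x))$.

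For the lower bound, I restrict the defining integral of $J_\alpha f$ to $B(e,r(x))$. For $y\in B(e,r(x))$ the triangle inequality gives $r(xy^{-1})\leq c_0(r(x)+r(y))\leq 2c_0 r(x)$, and since $\alpha-Q<0$ this yields $r(xy^{-1})^{\alpha-Q}\geq (2c_0)^{\alpha-Q}r(x)^{\alpha-Q}$. Integrating produces $(J_\alpha f)(x)\geq (2c_0)^{\alpha-Q}r(x)^{\alpha-Q}\int_{B(e,r(x))}f(y)\,dy=(2c_0)^{\alpha-Q}(H_\alpha f)(x)$; no monotonicity is needed here.

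For the upper bound I first record a ``doubling of averages'' for decreasing functions: by the polar-coordinate formula (Proposition \ref{polar}) together with $f\downarrow r$, one gets $\int_{B(e,2c_0 t)}f\ll \int_{B(e,t)}f$ (split off the annulus $B(e,2c_0t)\setminus B(e,t)$ and bound $f$ there by its value at $t$, which is in turn dominated by the average of $f$ over $B(e,t)$). Then I split $B(e,2c_0 r(x))$ according to whether $r(xy^{-1})\geq r(x)/(2c_0)$ or $r(xy^{-1})<r(x)/(2c_0)$. On the far region the kernel is at most $(r(x)/(2c_0))^{\alpha-Q}$, so this piece is controlled by $r(x)^{\alpha-Q}\int_{B(e,2c_0r(x))}f$, which by the doubling fact is $\ll (H_\alpha f)(x)$.

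The main obstacle is the near region $B(x,r(x)/(2c_0))=\{y:r(xy^{-1})<r(x)/(2c_0)\}$, where the kernel is singular. Here the triangle inequality $r(x)\leq c_0(r(xy^{-1})+r(y))$ forces $r(y)>r(x)/(2c_0)$, so monotonicity gives $f(y)\leq \bar f(r(x)/(2c_0))$; pulling this constant out leaves $\int_{B(x,r(x)/(2c_0))}r(xy^{-1})^{\alpha-Q}\,dy$, which by invariance of the Haar measure under the substitution $z=xy^{-1}$ and then polar coordinates equals $\tfrac{Q}{\alpha}(r(x)/(2c_0))^{\alpha}$, finite precisely because $\alpha>0$. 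It then remains to compare $\bar f(r(x)/(2c_0))\,r(x)^{Q}$ with $\int_{B(e,r(x))}f$; using monotonicity once more, $\int_{B(e,r(x))}f\geq\int_{B(e,r(x)/(2c_0))}f\geq \bar f(r(x)/(2c_0))\,(r(x)/(2c_0))^{Q}$, which closes the estimate and shows the near piece is $\ll (H_\alpha f)(x)$ as well. Combining the two regions yields $J_\alpha f\ll H_\alpha f$, and together with the lower bound this gives the claimed equivalence $J_\alpha f\approx H_\alpha f$.
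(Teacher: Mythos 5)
Your proof is correct, and it rests on the same two pillars as the paper's argument: monotonicity controls $f(y)$ by $\bar f\big(r(x)/(2c_0)\big)$ wherever the kernel is singular, and the kernel is comparable to $r(x)^{\alpha-Q}$ elsewhere. The decompositions, however, are genuinely different. The paper splits the domain by the size of $r(y)$, writing $J_\alpha f = J^{(1)}_\alpha f + J^{(2)}_\alpha f$ with $J^{(1)}_\alpha$ over the inner ball $B\big(e,r(x)/(2c_0)\big)$ and $J^{(2)}_\alpha$ over the annulus $B\big(e,2c_0 r(x)\big)\setminus B\big(e,r(x)/(2c_0)\big)$: on the inner ball the triangle inequality forces $r(xy^{-1})\geq r(x)/(2c_0)$, so that piece is $\ll H_\alpha f$ directly (its domain already lies inside $B(e,r(x))$, so no doubling lemma is needed), while on the annulus monotonicity plus the bound $\int_{B(e,2c_0 r(x))} r(xy^{-1})^{\alpha-Q}\,dy \ll r(x)^{\alpha}$ finishes. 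You instead split by the size of $r(xy^{-1})$, i.e.\ near/far from the singularity at $x$; this costs you the extra doubling-of-averages lemma on the far region (since it covers all of $B(e,2c_0 r(x))$ rather than a subset of $B(e,r(x))$), but it buys a clean, exact evaluation of the singular piece, $\int_{B(x,t)} r(xy^{-1})^{\alpha-Q}\,dy = (Q/\alpha)\,t^{\alpha}$, via translation invariance and polar coordinates --- a step the paper disposes of with ``it is easy to see''. One further point in your favor: the paper's proof as written establishes only the upper bound $J_\alpha f \ll H_\alpha f$ and never records the lower bound $H_\alpha f \ll J_\alpha f$ (which, as you correctly note, is elementary and needs no monotonicity); your write-up supplies both directions and is the more complete of the two.
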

\begin{proof}
We have

$$ (J_{\alpha}f)(x) = \int\limits_{B(e, r(x)/ 2c_0)} f(y) r^{\alpha-Q}(xy^{-1}) dy  + \int\limits_{B(e, 2c_0 r(x))\setminus B \big(e, r(x)/(2c_0)\big)} f(y) r^{\alpha-Q}(xy^{-1}) dy=: (J^{(1)}_{\alpha}f)(x) + (J^{(2)}_{\alpha}f)(x). $$
If $y\in B(e, r(x)/ 2c_0)$, then $r(x) \leq c_0 (r(xy^{-1}) +r(y)) \leq c_0 r(xy^{-1}) + r(x)/2 $. Hence $r(x) \leq 2 c_0 (r(xy^{-1})$. Consequently,
$$(J^{(1)}_{\alpha}f)(x) \leq c (H_{\alpha}f)(x).$$
Applying now the fact that $f\in DR (G)$ we see that
\begin{eqnarray*} (J^{(2)}_{\alpha}f)(x)&\leq& f(r(x)/2c_0) \int\limits_{B(e, r(x)/ 2c_0)\setminus B(e, 2c_0 r(x))} r^{\alpha-Q}(xy^{-1}) dy \leq c f(r(x)/2c_0) r(x)^{\alpha} \leq c (H_{\alpha}f)(x).
\end{eqnarray*}
\end{proof}

\begin{lem}\label{main2}Let $1<p\leq q<\infty$ and let $v$ and $w$ be weights on $G$ such that $\| w \|_{L^1(G)}=\infty$. Then the operator $S_{\alpha}$ is bounded from $L^p_{dec,r}(w,G)$ to $L^q(v,G)$ if

$$ \sup_{t>0}  \bigg( \int\limits_{G\setminus B(e,t)} r^{\alpha p'}(x) W^{-p'}(r(x))w(x) dx \bigg)^{1/p'} \bigg( \int\limits_{B\big(e,t/(2c_0)\big)}  v(x) dx \bigg)^{1/q}<\infty. $$\
Conversely, if $S_{\alpha}$ is bounded from $L^p_{dec,r}(w,G)$ to $L^q(v,G)$, then the condition
$$ \sup_{t>0}  \bigg( \int_{G\setminus B(e,t)} r^{\alpha p'}(x) W^{-p'}(x)w(x) dx \bigg)^{1/q} \bigg( \int\limits_{B\big(e,t/(4c_0)\big)}  v(x) dx \bigg)^{1/p'}<\infty $$
is satisfied. Furthermore, if either $w\in DC^{\alpha, p}$ or $v\in DC(G)$, then  the operator $S_{\alpha}$ is bounded from $L^p_{dec,r}(w,G)$ to $L^q(v,G)$ if and only if
$$ \sup_{t>0}  \bigg( \int_{G\setminus B(e,t)} r^{\alpha p'}(x) W^{-p'}(r(x))w(x) dx \bigg)^{1/q} \bigg( \int\limits_{B(e,t)} v(x) dx \bigg)^{1/p'}<\infty. $$
\end{lem}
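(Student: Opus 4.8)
The plan is to convert the cone inequality for $S_\alpha$ into an unrestricted two-weight Hardy inequality by means of the Sawyer-type duality of Proposition~\ref{Saw-Dua}, and then to read off the criterion from the Hardy theorem, Theorem~\ref{Hardy-G}(i). Applying Proposition~\ref{Saw-Dua} with $T=S_\alpha$, boundedness of $S_\alpha$ from $L^p_{dec,r}(w,G)$ to $L^q(v,G)$ is equivalent to
$$ \bigg( \int_G \Big( \int_{B(e,r(x))} (S_\alpha^* g)(y)\,dy \Big)^{p'} W^{-p'}(r(x)) w(x)\,dx \bigg)^{1/p'} \leq C \bigg( \int_G g^{q'}(x) v^{1-q'}(x)\,dx \bigg)^{1/q'} $$
for all $g\geq 0$. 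Since $y\notin B(e,2c_0 r(x))$ is the same as $r(x)<r(y)/(2c_0)$, a direct computation of the unweighted adjoint gives
$$ (S_\alpha^* g)(y)=\int_{B(e,\,r(y)/(2c_0))} g(z)\, r^{\alpha-Q}(yz^{-1})\,dz. $$

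The crux is to identify the inner function $F(x):=\int_{B(e,r(x))}(S_\alpha^* g)(y)\,dy$ with a weighted Hardy transform of $g$. Interchanging the order of integration, $F(x)=\int_{B(e,\,r(x)/(2c_0))} g(z)\,\Phi(x,z)\,dz$, where $\Phi(x,z)=\int_{\{2c_0 r(z)<r(y)<r(x)\}} r^{\alpha-Q}(yz^{-1})\,dy$. On the region $r(y)>2c_0 r(z)$ the triangle inequality for $r$ yields $r(yz^{-1})\approx r(y)$, so by the polar-coordinate formula of Proposition~\ref{polar} one obtains $\Phi(x,z)\approx r^\alpha(x)-(2c_0)^\alpha r^\alpha(z)$. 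This quantity is dominated by $r^\alpha(x)$ throughout $B(e,r(x)/(2c_0))$ and is bounded below by a constant multiple of $r^\alpha(x)$ on the smaller ball $B(e,r(x)/(4c_0))$, which produces the two-sided estimate
$$ c\,r^\alpha(x)\,(H^{1/(4c_0)}g)(x)\ \leq\ F(x)\ \leq\ C\,r^\alpha(x)\,(H^{1/(2c_0)}g)(x). $$
For the sufficiency I would insert the upper bound into the duality inequality, reducing matters to the boundedness of $H^{1/(2c_0)}$ from $L^{q'}(v^{1-q'},G)$ to $L^{p'}(r^{\alpha p'}W^{-p'}w,G)$. As $p\leq q$ gives $q'\leq p'$, Theorem~\ref{Hardy-G}(i) applies with the pair $(u_1,u_2)=(v^{1-q'},\,r^{\alpha p'}W^{-p'}w)$ and exponents $q'\leq p'$; since $u_1^{1-(q')'}=v$, its criterion is precisely the stated sufficient condition.

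For the necessity I would run the same reduction using instead the lower bound for $F$: if $S_\alpha$ is bounded, then so is $H^{1/(4c_0)}$ between the same spaces, and the necessity half of Theorem~\ref{Hardy-G}(i) yields the corresponding Muckenhoupt-type condition, now with the ball radius $t/(4c_0)$. The three conditions in the statement differ only through the radius of the ball on which $v$ is integrated ($t/(2c_0)$, $t/(4c_0)$, and $t$), the tail integral of $w$ always being taken over $G\setminus B(e,t)$; this discrepancy is exactly the gap between the upper and lower comparison of $F$. To close it under the extra hypothesis I would use doubling: if $v\in DC(G)$, a bounded number of doublings gives $\int_{B(e,t/(4c_0))}v\approx\int_{B(e,t/(2c_0))}v\approx\int_{B(e,t)}v$ uniformly in $t$, while if $w\in DC^{\alpha,p}(G)$ I would substitute $t\mapsto 2c_0 t$ (resp. $4c_0 t$) and absorb the constant factor into the tail integral $\int_{G\setminus B(e,t)}r^{\alpha p'}W^{-p'}w$, whose doubling is the defining property of $DC^{\alpha,p}(G)$. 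Either way the sufficiency and necessity conditions collapse to the single clean condition with radius $t$, giving the asserted equivalence. I expect the main obstacle to be the two-sided identification $F(x)\approx r^\alpha(x)H^a g(x)$ — in particular, controlling the kernel $r(yz^{-1})$ by $r(y)$ on the far region and tracking the correction term $(2c_0)^\alpha r^\alpha(z)$ so as to obtain matching bounds with explicit radius constants; it is precisely this bookkeeping that forces the doubling assumption in the final characterization.
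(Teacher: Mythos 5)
Your proposal is correct and follows essentially the same route as the paper's proof: Sawyer-type duality (Proposition \ref{Saw-Dua}) applied to $T=S_\alpha$, the two-sided pointwise estimate $c_1\, r^{\alpha}(x)\,(H^{1/(4c_0)}g)(x) \leq \int_{B(e,r(x))}(S^*_\alpha g)(y)\,dy \leq c_2\, r^{\alpha}(x)\,(H^{1/(2c_0)}g)(x)$, and then Theorem \ref{Hardy-G}(i) applied in the dual exponents $q'\leq p'$ with the weight pair $\big(v^{1-q'},\, r^{\alpha p'}W^{-p'}(r(\cdot))w\big)$. The only differences are cosmetic: you obtain the two-sided estimate via Fubini and the kernel comparison $r(yz^{-1})\approx r(y)$ on $\{r(y)>2c_0 r(z)\}$ rather than the paper's Lemma \ref{estimate} plus annulus argument, and you make explicit both the doubling step that collapses the radii $t/(2c_0)$, $t/(4c_0)$, $t$ into one condition and the consistent placement of the exponents $1/p'$ and $1/q$ (which the paper's statement of the lemma misprints), both of which the paper leaves implicit.
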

\begin{proof}
Applying Proposition \ref{Saw-Dua}, $S_{\alpha}$ is bounded from $L^p_{dec,r}(w,G)$ to $L^q(v,G)$ if and only if
$$ \bigg( \int\limits_G \bigg( \int\limits_{B(e, r(x))}  (S^*_{\alpha}f)(y) dy\bigg)^{p'} W^{-p'}(r(x)) w(x)dx \bigg)^{1/p'} \leq c\bigg( \int\limits_G g^{q'}(x) v^{1-q'}(x) dx \bigg)^{1/q'}, $$
where
$$ (S^*_{\alpha}f)(x)= \int\limits_{B\big(e, r(x)/(2c_0)\big)}  f(y) r^{\alpha-Q}(xy^{-1}) dy.$$

Now we show that
\begin{equation}\label{two-sided}
c_1 r^{\alpha}(x) \int\limits_{B\big(e, r(x)/ (4c_0)\big)} g(s) ds \leq  \int\limits_{B(e, r(x))}  (S^*_{\alpha}g)(y) dy   \leq c_2 r^{\alpha}(x) \int\limits_{B\big(e, r(x)/ (2c_0)\big)}g(s) ds,\;\;\; g\geq 0.
\end{equation}
To prove the right-hand side estimate in \eqref{two-sided} observe that by Tonelli's theorem and  Lemma \ref{estimate} we have that
\begin{eqnarray*}
\int\limits_{B(e, r(x))}  (S^*_{\alpha}g)(y) dy &=& \int\limits_{B\big(e, r(x)/(2c_0)\big)} f(s)  \bigg( \int\limits_{B\big(e, r(x)\big) \setminus B \big(e, 2c_0 r(s)\big)} r^{\alpha-Q}(sy^{-1}) dy \bigg) ds \\
 &\leq &   c_2 r(x)^{\alpha}  \int\limits_{B(e, r(x)/(2c_0))} f(s) ds .
\end{eqnarray*}
On the other hand,

\begin{eqnarray*}
\int\limits_{B(e, r(x))}  (S^*_{\alpha}g)(y) dy &\geq& c r^{\alpha-Q}(x) \bigg( \int\limits_{B(e, r(x))\setminus B(e, r(x)/2)}  \bigg( \int\limits_{B\big(e, r(y)/(2c_0)\big)} f(s) ds \bigg) dy\bigg)  \\
 &\geq& c_1 r^{\alpha}(x) \bigg( \int\limits_{B\big(e,   r(x)/(4c_0)\big) } f(s) ds   \bigg) .
\end{eqnarray*}
Thus, Theorem \ref{Hardy-G} completes the proof.
\end{proof}

{\em Proof} of Theorem \ref{Main-Theorem}.  By \eqref{representation} it is enough to estimate the terms with
$J_{\alpha}f$ and $S_{\alpha}f. $
By applying  Proposition \ref{main1} and Theorem \ref{Hardy-Inequality-G} we have that $J_{\alpha}$ is bounded from $L^p_{dec,r}(w,G)$ to $L^q(v,G)$ if and only if the conditions (ii) and (iii) are satisfied. Now by Lemma \ref{main2} and the equality (which is a consequence of Proposition \ref{polar})
 $$\bigg( \int\limits_{G\setminus B(e,t)} W(r(x)) w(x) dx\bigg)^{1/p'}= \bigg(\int\limits_{B(e,t)}  w(x) dx \bigg)^{-1/p} $$
 we have that $S_{\alpha}$  is bounded from $L^p_{dec,r}(w,G)$ to $L^q(v,G)$ if and only if (i) is satisfied. $\Box$

\section{Multiple Potentials on $G_1 \times G_2$}

Let us now investigate the two--weight problem for the operator $I_{\alpha, \alpha_2}$ on the cone ${\mathcal{DR}}(G_1\times G_2)$. In the sequel without loss of generality we denote  the triangle inequality constants for $G_1$ and $G_2$ by one and the same symbol $c_0$.

The following statement can be derived just in the same way as Theorem 3.1 was obtained  in \cite{BHP}. The proof is omitted because to avoid repeating  those arguments.
\vskip+0.2cm
\begin{propa}\label{BHP1} Let $1<p<\infty$ and let $w(x,y)= w_1(x)w_2(y)$ be a product weight on $G_1\times G_2$. Then the following relation
$$\sup_{0\leq f \downarrow r}\frac{\iint\limits_{G_1\times G_2} f(x,y)g(x,y) dxdy}{ \bigg(\iint\limits_{G_1\times G_2} f^{p}(x,y) w(x,y)\bigg)^{1/p}} \approx \sum_{i=1}^4 I_k,$$
holds for a non-negative measurable function $g$, where
$$ I_1:= \|w\|_{L^1(G_1\times G_2)}^{-1/p} \|g\|_{L^1(G_1 \times G_2)}, $$

$$  I_2:= \| w_2\|^{-1/p}_{L^1(G_1)} \bigg( \int\limits_{G_1} \int\limits_{B(e_1,r_1(x))} \| g(t,\cdot)\|_{L^1(G_2)}dt \bigg)^{p'} W_1^{-p'}(r_1(x))w_1(x) dx\bigg)^{1/p'}, $$

$$  I_2:= \| w_1\|^{-1/p}_{L^1(G_1)} \bigg( \int\limits_{G_2} \int\limits_{B(e_2,r_2(y))} \| g(\cdot, \tau)\|_{L^1(G_1)}d\tau \bigg)^{p'} W_2^{-p'}(r_2(y))w_2(y) dy\bigg)^{1/p'}, $$

$$ I_4:=\bigg( \int\limits_{G_1\times G_2} \bigg( \int\limits_{G_1\times G_2} g(t,\tau) dt d\tau \bigg)^{p'} W^{-p'}(r_1(x),r_2(y))w(x,y) dx dy\bigg)^{1/p'}.$$
\end{propa}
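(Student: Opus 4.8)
The plan is to obtain Proposition \ref{BHP1} as a two-fold iteration of the one-variable duality result in Proposition \ref{Duality} (in its product-weight form), exploiting the fact that a function radially decreasing on $G_1\times G_2$ is, for almost every fixed value of one variable, radially decreasing in the other. The key structural observation is that the product weight $w(x,y)=w_1(x)w_2(y)$ factorizes the inner norm: for fixed $x$ the quantity $\int_{G_2} f(x,y)^p w_2(y)\,dy$ is the one-variable weighted norm in the second variable, and then $\int_{G_1}\big(\cdots\big)w_1(x)\,dx$ is the norm in the first. This is exactly the setting in which Theorem 3.1 of \cite{BHP} was proved on ${\Bbb R}^n_+$, so the task reduces to checking that each analytic ingredient there has a homogeneous-group counterpart already established in the present paper.

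First I would fix the first variable $x$ and apply Proposition \ref{Duality} on $G_2$ with weight $w_2$ to the slice $f(x,\cdot)$, which is radially decreasing in the second variable; this replaces the supremum over the $G_2$-slice by the two terms $\|w_2\|_{L^1(G_2)}^{-1/p}\|g(x,\cdot)\|_{L^1(G_2)}$ and the Hardy-type term built from $H^{p'}(r_2(y))W_2^{-p'}(r_2(y))w_2(y)$. Substituting this pointwise-in-$x$ equivalence back and then applying Proposition \ref{Duality} a second time, now on $G_1$ with weight $w_1$, would generate four contributions corresponding to the four possible pairings (the constant--constant term, the two mixed terms where one variable is handled by the $L^1$-factor and the other by the Hardy functional, and the Hardy--Hardy term). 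Matching these four contributions against $I_1,I_2,I_3,I_4$ — after using Proposition \ref{polar} on each factor to rewrite the inner integrals as integrals over balls $B(e_i,r_i(\cdot))$ — is the bookkeeping that produces the claimed sum. Here one must also use the factorization $W(t,\tau)=W_1(t)W_2(\tau)$ of the product weight so that the Hardy--Hardy term collapses to the single expression with $W^{-p'}(r_1(x),r_2(y))$ appearing in $I_4$.

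The main obstacle is not any single inequality but the uniformity in the iteration: Proposition \ref{Duality} gives an $\approx$ (two-sided with constants), so iterating it requires that the equivalence in the inner variable hold with constants independent of the outer variable, and that the outer supremum can legitimately be interchanged with the inner one. The cleanest way to control this is to prove both directions separately. For the lower bound ($\gg$) one exhibits near-extremal decreasing test functions: taking $f(x,y)=\phi(x)\psi(y)$ with $\phi\downarrow r_1$ and $\psi\downarrow r_2$ chosen as near-optimizers of the two one-variable problems shows the left-hand side dominates each $I_k$ in turn, hence their sum. For the upper bound ($\ll$) one runs the two applications of Proposition \ref{Duality} as above; the delicate point is that after the first application the resulting functional of $x$ is itself a sum of a decreasing-in-$x$ piece and a Hardy-averaged piece, and one must verify that applying the $G_1$-duality to this sum yields no more than the four listed terms (subadditivity of the supremum gives $\ll$ cleanly, which is why the upper bound is the routine direction and the sharp matching of constants in the lower bound is where care is needed). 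Since all the required one-variable tools — Proposition \ref{polar}, Proposition \ref{Duality}, and its Corollary \ref{Duality-1} — are already available on arbitrary homogeneous groups, no genuinely new analytic estimate is needed, and the argument is a faithful transcription of \cite{BHP}; this is precisely why the authors omit it.
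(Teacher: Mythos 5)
Your sketch does not work, and the paper's appeal to \cite{BHP} is not a formality: the proof of Theorem 3.1 there is organized around exactly the two points your proposal glosses over. (Throughout I read the statement modulo its evident typos: the second term labelled $I_2$ is $I_3$, and the inner integral in $I_4$ should be over $B(e_1,r_1(x))\times B(e_2,r_2(y))$.) Consider first your upper bound. Applying Proposition \ref{Duality} in the second variable for fixed $x$ and then in the first variable does recover $I_1$ and $I_2$ exactly, but the two cross terms it produces are \emph{not} $I_3$ and $I_4$: in place of $I_3$ you get
$$\widetilde I_3:=\|w_1\|_{L^1(G_1)}^{-1/p}\int\limits_{G_1}\bigg(\int\limits_{G_2}\bigg(\frac{1}{W_2(r_2(y))}\int\limits_{B(e_2,r_2(y))}g(t,\tau)\,d\tau\bigg)^{p'}w_2(y)\,dy\bigg)^{1/p'}dt,$$
in which the integration in $t$ sits outside the $L^{p'}(w_2)$-norm, whereas in $I_3$ it sits inside; similarly for $I_4$ and its iterated counterpart $\widetilde I_4$. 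Minkowski's integral inequality gives $I_3\le\widetilde I_3$ and $I_4\le\widetilde I_4$, i.e.\ the comparison goes the wrong way, and no reverse inequality holds in general. So subadditivity of the supremum yields only the strictly weaker bound $\mathrm{LHS}\ll I_1+I_2+\widetilde I_3+\widetilde I_4$; it does not prove $\mathrm{LHS}\ll\sum_k I_k$. Your remark that the upper bound is ``the routine direction'' has the difficulty exactly backwards.

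The lower bound is equally gapped: product test functions $f=\phi\otimes\psi$ cannot generate $I_4$, since that would require pulling the supremum over $\psi$ inside the $L^{p'}(w_1)$-norm in $x$, an interchange that is false. One example (in the model case $G_1=G_2=\mathbb{R}_+$ of \cite{BHP}) kills both halves of your argument at once: take $p=q=2$, $w_1=w_2\equiv1$, and let $g$ be a smoothing of $\sum_{j=1}^{N}2^{j/2}\delta_{(j,2^j)}$. Then $I_1=I_2=I_3=0$ and $\sum_k I_k=I_4\approx(\log N)^{1/2}$, while (a) every product $f=\phi\otimes\psi$ with $\phi,\psi$ decreasing satisfies $\iint fg\le\sqrt{2}\,\|\phi\|_{2}\|\psi\|_{2}$ by Cauchy--Schwarz, so products only witness $\mathrm{LHS}\gg 1$, and (b) the iterated bound gives $\widetilde I_4\approx N^{1/2}$, so iteration only witnesses $\mathrm{LHS}\ll N^{1/2}$. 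The true supremum, $\approx(\log N)^{1/2}$, is attained on the genuinely two-dimensional decreasing function $f(x,y)=\big(\int_x^\infty\int_y^\infty G^2(s,t)\,(st)^{-3}\,ds\,dt\big)^{1/2}$ with $G(s,t)=\iint_{[0,s]\times[0,t]}g$, a Sawyer-type test function which is neither a product nor visible to slice-wise iteration. Constructing such test functions and controlling the mixed-norm cross terms is the actual content of the proof in \cite{BHP}; what you propose is a different, and insufficient, argument rather than a transcription of it.
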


Applying  Proposition \ref{BHP1}  together with the duality arguments we can get the following statement (cf. \cite{BHP}).
\begin{propa}\label{BHP2}
Let $1<p<\infty$ and let  $v$ and $w$ be weights on $G_1\times G_2$ such that $w(x,y)= w_1(x) w_2(y)$, $\|w\|_{L^1(G_1\times G_2)}=\infty$. Then an integral operator $T$ defined for  functions from  ${\mathcal{DR}}(G_1\times G_2)$ is bounded from $L^p_{dec, r}(w, G_1\times G_2)$ to $L^p(v, G_1\times G_2)$ if and only if for all non-negative measurable $g$ on $G_1 \times G_2$,
$$ \bigg( \iint\limits_{G_1\times G_2} \bigg( \iint\limits_{B(e_1, r_1(x))\times B(e_2, r_2(y))} (T^*g)(t,\tau)dt d\tau\bigg)^{p'} W^{-p'}(x,y)  w(x,y) dxdy\bigg)^{1/p'} \leq C \bigg( \iint\limits_{G_1\times G_2} g^{q'}(x,y) v^{1-q'}(x,y) dx dy\bigg)^{1/q'}.$$
\end{propa}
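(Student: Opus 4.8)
The plan is to transpose, to the two--parameter product setting, the argument that turns Corollary \ref{Duality-1} into the Sawyer--type duality of Proposition \ref{Saw-Dua}: one dualizes the target space, passes to the formal adjoint $T^{*}$, and evaluates the resulting supremum over the cone of decreasing functions by means of the product duality formula of Proposition \ref{BHP1} (which here plays the role that Corollary \ref{Duality-1} played in the one--variable case).

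First I would record that, writing $p'$ for the conjugate exponent, boundedness of $T$ from $L^p_{dec,r}(w,G_1\times G_2)$ to $L^p(v,G_1\times G_2)$ is the statement
$$\sup_{0\le f\downarrow r}\frac{\|Tf\|_{L^p(v,G_1\times G_2)}}{\|f\|_{L^p(w,G_1\times G_2)}}<\infty .$$
Dualizing the target norm through the $L^{p}(v)$--$L^{p'}(v^{1-p'})$ pairing and using Tonelli's theorem to move $T$ onto its adjoint via
$$\iint\limits_{G_1\times G_2}(Tf)\,g\,dxdy=\iint\limits_{G_1\times G_2} f\,(T^{*}g)\,dxdy ,$$
this is equivalent to
$$\sup_{g\ge 0}\;\frac{1}{\Big(\iint\limits_{G_1\times G_2} g^{p'}v^{1-p'}\,dxdy\Big)^{1/p'}}\;\sup_{0\le f\downarrow r}\frac{\iint\limits_{G_1\times G_2} f\,(T^{*}g)\,dxdy}{\Big(\iint\limits_{G_1\times G_2} f^{p}w\,dxdy\Big)^{1/p}}<\infty ,$$
the inner supremum being taken with $g$, and hence $T^{*}g$, held fixed.

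Second, for each fixed $g\ge 0$ I would apply Proposition \ref{BHP1} with $g$ replaced by $T^{*}g$ to identify the inner supremum with $\sum_{k=1}^{4}I_k[T^{*}g]$, where $I_1[T^{*}g]=\|w\|_{L^1}^{-1/p}\|T^{*}g\|_{L^1}$, the terms $I_2,I_3$ carry the marginal prefactors $\|w_2\|_{L^1(G_2)}^{-1/p}$ and $\|w_1\|_{L^1(G_1)}^{-1/p}$, and $I_4[T^{*}g]$ is exactly the double--integral expression
$$\Big(\iint\limits_{G_1\times G_2}\Big(\iint\limits_{B(e_1,r_1(x))\times B(e_2,r_2(y))}(T^{*}g)(t,\tau)\,dt\,d\tau\Big)^{p'}W^{-p'}(r_1(x),r_2(y))\,w(x,y)\,dxdy\Big)^{1/p'}$$
appearing on the left of the asserted inequality. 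Inserting this back and taking the supremum over all admissible $g$ then delivers the stated characterization, \emph{provided} the three lower--order terms can be dropped.

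The main obstacle is precisely the elimination of $I_1,I_2,I_3$. Since $w=w_1w_2$ is a product weight, $\|w\|_{L^1(G_1\times G_2)}=\|w_1\|_{L^1(G_1)}\,\|w_2\|_{L^1(G_2)}$, so the hypothesis $\|w\|_{L^1}=\infty$ must be used to force both marginal masses $\|w_1\|_{L^1(G_1)}$ and $\|w_2\|_{L^1(G_2)}$ to diverge; the prefactors $\|w\|_{L^1}^{-1/p}$, $\|w_2\|_{L^1(G_2)}^{-1/p}$ and $\|w_1\|_{L^1(G_1)}^{-1/p}$ then annihilate $I_1$, $I_2$ and $I_3$ respectively, leaving only $I_4[T^{*}g]$. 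Checking that one is genuinely in this regime (and that no residual contribution of $I_2$ or $I_3$, whose inner integrals run over all of $G_2$ or $G_1$, can compete with the truncated double integral $I_4$) is the delicate point, and it is handled exactly as in the Euclidean argument of \cite{BHP}. Once it is settled, the equivalence of the inner supremum with $I_4[T^{*}g]$ holds with constants uniform in $g$, and dualizing over $g\ge 0$ yields the claimed inequality in both directions.
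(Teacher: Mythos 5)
Your overall route is the one the paper itself takes: the paper's entire proof of this proposition is the sentence preceding it, namely that it follows from Proposition \ref{BHP1} ``together with the duality arguments'' (cf. \cite{Saw}, \cite{BHP}), and your first two steps --- dualizing the $L^p(v)$ norm, moving $T$ onto $T^{*}$ by Tonelli, and evaluating the inner supremum over the cone by Proposition \ref{BHP1} applied to $T^{*}g$ --- are a correct expansion of that sentence.

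The genuine gap is in your elimination of $I_2$ and $I_3$, i.e., exactly at the point you yourself call delicate. For a product weight with a.e.\ positive factors one has $\|w\|_{L^1(G_1\times G_2)}=\|w_1\|_{L^1(G_1)}\,\|w_2\|_{L^1(G_2)}$, and this product is infinite as soon as \emph{one} factor is infinite; the hypothesis $\|w\|_{L^1(G_1\times G_2)}=\infty$ therefore does \emph{not} force both marginal masses to diverge, contrary to what you assert. Take $w_1\equiv 1$ and $w_2\in L^1(G_2)$: then $\|w\|_{L^1(G_1\times G_2)}=\infty$, yet $\|w_2\|_{L^1(G_2)}^{-1/p}\neq 0$, so $I_2[T^{*}g]$ survives. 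This term is not negligible: testing the inner supremum with functions $f(x,y)=\varphi(r_1(x))$ constant in $y$ shows that $I_2[T^{*}g]$ is genuinely attained (up to constants) by the supremum, and it is not dominated by $I_4[T^{*}g]$ in general, since its inner integral runs over all of $G_2$ while $W_2$ stays bounded, whereas $I_4$ only sees the truncations $B(e_2,r_2(y))$. Hence, under the literal hypothesis, the displayed inequality does not imply boundedness of $T$, and the sufficiency half of your argument collapses. The hypothesis actually needed --- and the one under which your argument does go through --- is $\|w_i\|_{L^1(G_i)}=\infty$ for \emph{each} $i=1,2$; this is precisely what the paper imposes ($W_i(\infty)=\infty$, $i=1,2$) when it invokes this proposition in the proof of Proposition \ref{double-Hardy}, so the imprecision originates in the paper's statement, but your proof should have flagged and repaired it rather than deduced the stronger condition from the weaker one. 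Two smaller points: even when both marginal masses are infinite, dropping $I_1,I_2,I_3$ because ``the prefactor vanishes'' is an illegitimate $0\cdot\infty$ evaluation, since the bracketed integrals may themselves be infinite; the clean way is to prove the infinite-mass duality relation directly, as in the passage from Proposition \ref{Duality} to Corollary \ref{Duality-1}. Finally, your consistent use of $p'$ corresponds to the statement's $L^p(v)$ target; the exponent $q'$ on its right-hand side signals the intended (and later used) version with target $L^q(v)$, $p\leq q$, which your argument would need to accommodate by dualizing with exponent $q'$.
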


The next statements deals with the double Hardy--type operators defined on $G_1\times G_2$

$$ (H^{a,b}f)(x,y)= \int\limits_{B(e_1, a r_1(x))} \int\limits_{B(e_2, b r_2(x)) } f(t, \tau) dt d\tau, \;\;\;\; (x,y)\in G_1 \times G_2, $$

$$ (\tilde{H}^{a,b}f)(x,y)= \int\limits_{G_1\setminus B(e_1, a r_1(x))} \int\limits_{G_2\setminus B(e_2, b r_2(x))} f(t, \tau) dt d\tau, \;\;\;\; (x,y)\in G_1 \times G_2, $$

$$
(H_1^{a,b}f)(x,y)= \int\limits_{B(e_1, a r_1(x))}  \int\limits_{G_2 \setminus B(e_2, b r_2(y))}   f(t, \tau) dt d\tau, \;\;\;\; (x,y)\in G_1 \times G_2, $$

$$
(H_2^{a,b}f)(x,y)= \int\limits_{G_1\setminus B(e_1, a r_1 (x))}  \int\limits_{B(e_2, b r_2(y))}   f(t, \tau) dt d\tau, \;\;\;\; (x,y)\in G_1 \times G_2. $$

\begin{prop}\label{Hardy-G_1-G_2}  Let $1<p\leq q<\infty$. Suppose that $v$ and $w$ be weights on $G_1\times G_2$ such that either $w(x,y)= w_1(x) w_2(y)$ or $v(x,y)= v_1(x) v_2(y)$. Then

 \rm{(i)} The operator  $H^{a,b}$ is bounded from $L^p(w, G_1\times G_2)$ to $L^q(v, G_1\times G_2)$ if and only if
$$ A:= \sup_{t>0, \tau>0}  \bigg( \int\limits_{G_1\setminus B(e_1,t)}\int\limits_{G_2\setminus B(e_2,\tau)} v(x,y) dxdy \bigg)^{1/q} \bigg( \int\limits_{B(e_1,at)} \int\limits_{B(e_2,b\tau)} w^{1-p'}(x,y) dxdy \bigg)^{1/p'}<\infty. $$

\rm{(ii)} The operator  $\tilde{H}^{a,b}$ is bounded from $L^p(w, G_1\times G_2)$ to $L^q(v, G_1\times G_2)$ if and only if
$$ \sup_{t>0, \tau>0}  \bigg( \int\limits_{B(e_1,t)} \int\limits_{B(e_2,\tau)} v(x,y) dxdy \bigg)^{1/q} \bigg(  \int\limits_{G_1\setminus B(e_1,at)}\int\limits_{G_2\setminus B(e_2, b \tau)} w^{1-p'}(x,y) dxdy \bigg)^{1/p'}<\infty. $$

\rm{(iii)} The operator  $H_1^{a,b}$ is bounded from $L^p(w, G_1\times G_2)$ to $L^q(v, G_1\times G_2)$ if and only if
$$ \sup_{t>0, \tau>0}  \bigg( \int\limits_{G_1\setminus B(e_1,t)} \int\limits_{B(e_2, \tau)} v(x,y) dxdy \bigg)^{1/q} \bigg(  \int\limits_{B(e_1,at)}\int\limits_{G_2\setminus B(e_2, b \tau)} w^{1-p'}(x,y) dxdy \bigg)^{1/p'}<\infty. $$

\rm{(iv)} The operator  $H_2^{a,b}$ is bounded from $L^p(w, G_1\times G_2)$ to $L^q(v, G_1\times G_2)$ if and only if
$$ \sup_{t>0, \tau>0}  \bigg( \int\limits_{B(e_1,t) } \int\limits_{G_2\setminus B(e_2,\tau )} v(x,y) dxdy \bigg)^{1/q} \bigg(  \int\limits_{G_1\setminus B(e_1,at)}\int\limits_{ B(e_2, b \tau)} w^{1-p'}(x,y) dxdy \bigg)^{1/p'}<\infty. $$
\end{prop}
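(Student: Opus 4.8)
The plan is to reduce the two--parameter estimates in (i)--(iv) to the one--dimensional Hardy inequalities of Theorem~\ref{Hardy-G}, exploiting the product structure of whichever of the two weights is assumed to split. First I would observe that the four operators $H^{a,b}$, $\tilde H^{a,b}$, $H_1^{a,b}$, $H_2^{a,b}$ are obtained from one another by interchanging, in each of the two variables separately, a ball $B(e_i,\cdot)$ with its complement $G_i\setminus B(e_i,\cdot)$; correspondingly the four supremum conditions differ only by the same interchange of ball and complement. Hence it suffices to carry out the argument in detail for part (i) and to note that the remaining three follow verbatim, with part (i) of Theorem~\ref{Hardy-G} replaced by part (ii) in whichever variable a complement appears. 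Throughout I would pass to polar coordinates via Proposition~\ref{polar}, so that every ball integral becomes a radial integral over ${\Bbb R}_+$ and $H^{a,b}$ becomes a genuine two--parameter Hardy operator on ${\Bbb R}_+\times{\Bbb R}_+$.

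For \emph{necessity} I would insert into the boundedness inequality the natural extremiser $f=w^{1-p'}\chi_{B(e_1,at)\times B(e_2,b\tau)}$. For $(x,y)\in (G_1\setminus B(e_1,t))\times(G_2\setminus B(e_2,\tau))$ the domain of integration $B(e_1,ar_1(x))\times B(e_2,br_2(y))$ of $H^{a,b}$ contains the rectangle $R:=B(e_1,at)\times B(e_2,b\tau)$, so $H^{a,b}f(x,y)\ge \iint_{R}w^{1-p'}$, while a direct computation gives $\|f\|_{L^p(w)}=\big(\iint_{R}w^{1-p'}\big)^{1/p}$. Estimating the target norm from below over the co--rectangle and rearranging the powers of $\iint_{R}w^{1-p'}$ leaves the exponent $1/p'$ on the right and yields exactly the finiteness of $A$. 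The same test functions, supported on the rectangle dictated by each operator, give necessity in (ii)--(iv).

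For \emph{sufficiency} I treat the case $w(x,y)=w_1(x)w_2(y)$; the case $v=v_1v_2$ follows by the same scheme after dualising, since the adjoint of each operator in (i)--(iv) is again of the same type with the roles of $v$ and $w$ (hence of the product hypothesis) interchanged. Writing $H^{a,b}=H^{a}_{(1)}\circ H^{b}_{(2)}$ as the composition of the one--variable Hardy operators in the first and second variables, the strategy is to peel off one factor by Theorem~\ref{Hardy-G} applied in the variable in which $w$ factorises cleanly, and then to dispose of the second factor by a further one--dimensional estimate. Because $1<p\le q<\infty$, the two one--dimensional estimates are amalgamated through Minkowski's integral inequality, i.e. the embedding $L^{p}_{\tau}\big(L^{q}_{t}\big)\hookrightarrow L^{q}_{t}\big(L^{p}_{\tau}\big)$; it is precisely this interchange, available only for $p\le q$, that makes the single rectangular condition $A$ sufficient, with no supplementary conditions of the kind that appear in the general non--product two--parameter theory.

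The main obstacle is that the weight $v$ genuinely couples the two variables, so one cannot simply freeze a variable and quote Theorem~\ref{Hardy-G}: the resulting one--dimensional constant would depend on the frozen variable and need not be dominated by $A$. The product hypothesis on $w$ is exactly what breaks this deadlock --- applying the one--dimensional theorem first in the variable carrying the factored weight extracts a constant governed by a one--variable slice of the supremum $A$, after which the remaining integration is controlled by a second application of Theorem~\ref{Hardy-G} together with the $p\le q$ mixed--norm interchange. The technical heart, then, is to verify that $A<\infty$ dominates both intermediate one--dimensional Muckenhoupt suprema uniformly; once this is established, the passage from (i) to (ii)--(iv) only alters which of the two parts of Theorem~\ref{Hardy-G} is invoked in each variable.
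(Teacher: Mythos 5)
Your necessity argument (the test functions $f=w^{1-p'}\chi_{B(e_1,at)\times B(e_2,b\tau)}$), your reduction of (ii)--(iv) to the argument for (i), and the duality reduction when $v$ is the product weight all match what the paper does. The genuine gap is in the sufficiency proof of (i), at exactly the step you defer to the end: the claim that $A<\infty$ uniformly dominates the intermediate one-dimensional Muckenhoupt constants. That claim fails, and the composition-plus-Minkowski scheme cannot be closed as described. Concretely, if you first apply Theorem~\ref{Hardy-G} in the second variable for frozen $x$ (with source weight $w_2$), the constant you extract is the pointwise supremum
$$\beta(x)=\sup_{\tau>0}\bigg(\int\limits_{G_2\setminus B(e_2,\tau)}v(x,y)\,dy\bigg)^{1/q}\bigg(\int\limits_{B(e_2,b\tau)}w_2^{1-p'}(y)\,dy\bigg)^{1/p'},$$
and after the Minkowski interchange the second application of Theorem~\ref{Hardy-G}, now in the first variable, requires
$$\sup_{t>0}\bigg(\int\limits_{G_1\setminus B(e_1,t)}\beta(x)^q\,dx\bigg)^{1/q}\bigg(\int\limits_{B(e_1,at)}w_1^{1-p'}(x)\,dx\bigg)^{1/p'}<\infty,$$
i.e.\ an integral of a supremum. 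But $A<\infty$ only controls the supremum of the integral, $\sup_{\tau}\int_{G_1\setminus B(e_1,t)}[\cdots]\,dx$, and $\int\sup_\tau\geq\sup_\tau\int$ with no reverse inequality; the condition you need is strictly stronger than $A$. Freezing the other variable first is even worse: the slice integrals $\int_{G_2\setminus B(e_2,\tau)}v(x,y)\,dy$ at a single $x$ (or $\int_{G_1\setminus B(e_1,t)}v(x,y)\,dx$ at a single $y$) are not controlled by $A$ at all, as you yourself note. So the ``technical heart'' of your outline is not merely unverified --- in the form stated it is false.

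The mechanism the paper uses instead, and which is missing from your outline, is a blocking (dyadic) decomposition rather than an iteration: one chooses radii $x_k$ in the second variable so that $\int_{B(e_2,bx_k)}w_2^{1-p'}(y)\,dy=2^k$ (this is where the product structure of $w$ enters), splits the $y$-integration into the annuli $B(e_2,x_{k+1})\setminus B(e_2,x_k)$, and on each block applies the one-dimensional Theorem~\ref{Hardy-G} in the \emph{first} variable with target weight $V_k(x)=\int_{B(e_2,x_{k+1})\setminus B(e_2,x_k)}v(x,y)\,dy$. Since the annulus lies in $G_2\setminus B(e_2,x_k)$, one has $\int_{G_1\setminus B(e_1,t)}V_k(x)\,dx\leq\iint_{(G_1\setminus B(e_1,t))\times(G_2\setminus B(e_2,x_k))}v$, so this one-dimensional constant \emph{is} dominated by $A$ --- the supremum stays outside the $x$-integral, which is what saves the argument. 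The resulting sum over the levels $k$ is then handled not by Minkowski's inequality but by the discrete Hardy inequality (Bennett) together with H\"older's inequality, exploiting the geometric growth $2^k$ of the blocks; a separate (modified) discrete argument covers the case $\int_{G_2}w_2^{1-p'}<\infty$. Without this blocking-plus-discrete-Hardy step, or an equivalent substitute, your proof cannot be completed.
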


\begin{proof}  Let $w(x,y)= w_1(x) w_2(y)$. Then the proposition  follows  in the same way as the appropriate statements regarding the Hardy operators defined on ${\Bbb{R}}_+^2$ in \cite{MeJFSA}, \cite{KoMe3} (see also Theorem 1.1.6 of \cite{KMP}). If $v$ is  a product weight, i.e.  $v(x,y)= v_1(x) v_2(y)$, then the result follows from the duality arguments.
We give the proof, for example,  for $H^{a,b}$ in the case when $w(x,y)= w_1(x) w_2(y)$.

First suppose that $S:=\int\limits_{G_2}w^{1-p'}_2(y)dy =\infty$. Let $\{x_k\}_{k=-\infty}^{+\infty}$ be a
sequence of positive numbers for which the equality

\begin{equation}\label{1.1.16}
2^k=\int\limits_{B(e_2, b x_k)} w_2^{1-p'}(y) dy
\end{equation}
holds for all $k\in {\mathbb{Z}}$. This equality  follows because of the continuity in $t$ of the integral over the ball $B(e_2, bt)$.  It is clear that $\{x_k\}$ is increasing
and ${\mathbb{R}}_+ = \cup_{k\in {\mathbb{Z}}}[x_k, x_{k+1})$. Moreover, it is easy to verify that
$$ 2^k = \int\limits_{B(e_2, b x_{k+1})\setminus B(e_2, b x_{k})} w_2^{1-p'}(y)dy. $$  Let $f\geq 0$. We have that
\begin{eqnarray*} && \|H^{a,b}f\|^q_{L^q_{v}(G_1\times G_2)}= \iint\limits_{G_1\times G_2}  v(x,y) \big(H^{a,b} f\big)^q (x,y) dxdy \\
 &\leq& \sum_{k \in {\mathbb{Z}}} \int\limits_{G_1} \int\limits_{B(e_2, x_{k+1})\setminus B(e_2,  x_{k})} v(x,y) \bigg( \iint\limits_{ B\big(e_1, a r_1(x)\big)\times B\big(e_2, b r_2(x)\big)}
f(t,\tau)dt d\tau \bigg)^q dxdy
\\ &\leq& \sum_{k \in {\mathbb{Z}}} \int\limits_{G_1}  \bigg(\int\limits_{B(e_2, x_{k+1})\setminus B(e_2,  x_{k})} v(x,y) dy\bigg)\bigg( \int\limits_{B(e_1, ar_1(x))} \bigg(\int\limits_{B(e_2, bx_{k+1})}
f(t,\tau)d\tau\bigg)dt \bigg)^q dx \\ &=& \sum_{k\in {\mathbb{Z}}} \int\limits_{G_1}  V_k(x)\bigg( \int\limits_{B(e_1, ar_1(x))} F_k(t) dt \bigg)^q dx,
\end{eqnarray*}
where
$$V_k(x):=\int\limits_{B(e_2,  x_{k+1})\setminus B(e_2,  x_{k})} v(x,y) dy;\;\; F_k(t):= \int\limits_{B(e_2,  b x_{k+1})} f(t,\tau)d\tau.$$

It is obvious that
$$ A^q \geq \sup_{\substack{a>0 \\ j\in {\mathbb{Z}}}} \bigg(\int\limits_{G_1\setminus B(e_1, t)}v_j (y) dy\bigg) \bigg(\iint\limits_{B(e_1, at)\times B(e_2, b x_j)} w^{1-p'}(x,y)dxdy\bigg)^{q/p'}. $$
Hence, by Theorem A
$$ \|H^{a,b}f\|^q_{L^q_{v}(G_1 \times G_2)} \leq  c A^q \sum_{j\in {\mathbb{Z}}}\bigg[ \int\limits_{G_1} w_1(x) \bigg(\int\limits_{B(e_2, bx_j)} w_{2}^{1-p'}(y)dy\bigg)^{1-p} (F_k(x))^p dx \bigg]^{q/p} $$
$$ \leq cA^q \bigg[ \int\limits_{G_1} w_1(x) \sum_{j\in Z} \bigg( \int\limits_{B(e_2, b x_j)} w_{2}^{1-p'}(y)dy\bigg)^{1-p} \\  \bigg(\sum_{k=-\infty}^{j} \int\limits_{B(e_2, b x_{k+1}) \setminus B(e_2, b x_{k})}
f(x,\tau)d\tau\bigg)^p dx \bigg]^{q/p}.
$$
 On the other hand,
\eqref{1.1.16} yields that
\begin{eqnarray*} && \sum_{k=n}^{+\infty} \bigg( \int\limits_{B(e_2, b x_k)} w_2^{1-p'}(y) dy \bigg)^{1-p} \bigg(\sum_{k=-\infty}^{n} \int\limits_{B(e_2, b x_{k+1})\setminus B(e_2, b x_{k})}w_2^{1-p'}(y) dy \bigg)^{p-1}\\ & =& \sum_{k=n}^{+\infty} \bigg( \int\limits_{B(e_2, b x_k)} w_2^{1-p'}(y) dy \bigg)^{1-p} \bigg( \int\limits_{B(e_2, b x_{n+1})}w_2^{1-p'}(y) dy \bigg)^{p-1}\!\!\!=
\Big(\sum_{k=n}^{+\infty}2^{k(1-p)}\Big)2^{(n+1)(p-1)}\leq c
\end{eqnarray*}
for all $n\in {\mathbb{Z}}$. Hence by the discrete Hardy inequality (see e.g. \cite{Ben}) and H\"{o}lder's
inequality we have
\begin{eqnarray*} \|H^{a,b}f\|^q_{L^q_{v}(G_1 \times G_2)} &\leq& c A^q \bigg[ \int\limits_{G_1} w_1(x) \sum_{j\in {\mathbb{Z}}} \bigg( \int\limits_{B(e_2, b x_{j+1})\setminus B(e_2, b x_{j})}  w_{2}^{1-p'}(y)dy\bigg)^{1-p} \bigg(\int\limits_{B(e_2, b x_{j+1})\setminus B(e_2, bx_{j})} f(x,\tau)d\tau\bigg)^p dx \bigg]^{q/p} \\ &\leq& c A^q \bigg[ \int\limits_{G_1}  w_1(x) \sum_{j\in {\mathbb{Z}}}  \bigg( \int\limits_{B(e_2, b x_{j+1})\setminus B(e_2, b x_{j})}
w_2(\tau)f^p(x,\tau)d\tau\bigg) dx \bigg]^{q/p}=  c A^q \|f\|_{L^p_w(G_1 \times G_2)}^q.
\end{eqnarray*}

If  $S<\infty$, then without loss of generality we can assume that
$S=1$. In this case we  choose the sequence $\{ x_k \}_{k=-\infty}^{0}$ for which (\ref{1.1.16}) holds for all $k\in {\mathbb{Z}}_-$.
Arguing as in the case $S=\infty$ and using  slight modification of the discrete Hardy inequality (see also \cite{KMP}, Chapter 1 for similar arguments), we finally obtain the desired result.

Finally we notice that the  part (i) can be also proved if we first  establish the  boundedness of the operator $({\mathcal{H}}^{a,b}\varphi)(t,\tau) =\int\limits_0^{at}\int\limits_0^{b\tau} \varphi(s,r) dsdr $ in the spirit of Theorem 1.1.6 in \cite{KMP} and then pass to the case of $G_1 \times G_2$ by Proposition \ref{polar}.
\end{proof}

\vskip+0.2cm

The next statement will be useful for us.

\begin{prop}\label{double-Hardy}  Let $1<p\leq q<\infty$. Assume that $v$ and $w$ are weights on $G_1\times G_2$. Suppose that $w(x,y)=  w_1(x)w_2(y)$ and that $W_i(\infty)=\infty$, $i=1,2$. Then  the operator $H^{1,1}$ is bounded from $L^p_{dec,r}(w, G_1\times G_2)$ to $L^q(v, G_1 \times G_2)$ if and only if the following four conditions are satisfied:

${\rm{(i)}}$
$$ \sup_{a_1, a_2>0} \bigg(\int\limits_{B(e_1, a_1)}  \int\limits_{B(e_2, a_2)}  w(x,  y) dx dy \bigg)^{-1/p} \bigg(\int\limits_{B(e_1, a_1)}  \int\limits_{B(e_2, a_2)}  r_1^{Q_1 q}(x) r_2(y) ^{Q_2 q} v(x,y) dx dy\bigg)^{1/q}< \infty;$$

${\rm{(ii)}}$
$$\sup_{a_1, a_2>0} \bigg( \int\limits_{B(e_1, a_1)}  \int\limits_{B(e_2, a_2)}   r_1^{Q_1 p'} (x) r_2(y)^{Q_2 p'} W^{-p'}(r_1(x), r_2(y)) w(x,y)
dx dy  \bigg)^{1/p'} \bigg(\int\limits_{G_1\setminus B(e_1, a_1)}  \int\limits_{G_2 \setminus B(e_2, a_2)}  v(x,y)dx dy \bigg)^{1/q} <\infty; $$

${\rm{(iii)}}$
$$\sup_{a_1, a_2>0} \bigg( \int\limits_{B(e_1, a_1)} w_1(r_1(x)) dx\bigg)^{-1/p} \bigg(\int\limits_{B(e_2, a_2)} r_2(y)^{Q_2p'} W_2^{-p'}(r_2(y))w_2(y)dy\bigg)^{1/p'} \bigg(\int\limits_{B(e_1, a_1)} \int\limits_{G_2\setminus B(e_2, a_2)} r_1(x)^{Q_1 q} v(x,y)dx dy \bigg)^{1/q} <\infty; $$

${\rm{(iv)}}$
$$\sup_{a_1, a_2>0} \bigg(\int\limits_{B(e_1, a_1)} r_1(x)^{Q_1p'}W_1^{-p'}(r_1(x))w_1(x)dt_1\bigg)^{1/p'}\bigg( \int\limits_{B(e_2, a_2)} w_2(y)d y\bigg)^{-1/p}  \bigg(\int\limits_{G_1\setminus B(e_1, a_1)} \int\limits_{B(e_2, a_2)}   r_2(y)^{Q_2q} v(x,y)dx dy \bigg)^{1/q} <\infty. $$
\end{prop}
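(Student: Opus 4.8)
The plan is to apply the product Sawyer-type duality theorem, Proposition \ref{BHP2}, to the operator $T=H^{1,1}$ and then to reduce the resulting single inequality to four ordinary two-weight Hardy inequalities on $G_1\times G_2$ that are already characterized by Proposition \ref{Hardy-G_1-G_2}. First I would compute the formal adjoint: interchanging the order of integration shows at once that $(H^{1,1})^*=\tilde H^{1,1}$, namely
$$ (H^{1,1})^*g(t,\tau)=\int\limits_{G_1\setminus B(e_1,r_1(t))}\int\limits_{G_2\setminus B(e_2,r_2(\tau))} g(\xi,\eta)\,d\xi\,d\eta. $$
Since $\|w\|_{L^1(G_1\times G_2)}=W_1(\infty)W_2(\infty)=\infty$, Proposition \ref{BHP2} tells us that boundedness of $H^{1,1}$ from $L^p_{dec,r}(w)$ to $L^q(v)$ is equivalent to
$$ \bigg(\iint\limits_{G_1\times G_2}\Phi(x,y)^{p'}W^{-p'}(r_1(x),r_2(y))w(x,y)\,dx\,dy\bigg)^{1/p'}\leq C\bigg(\iint\limits_{G_1\times G_2} g^{q'}v^{1-q'}\bigg)^{1/q'}, $$
holding for all $g\geq 0$, where $\Phi(x,y):=\iint\limits_{B(e_1,r_1(x))\times B(e_2,r_2(y))}(\tilde H^{1,1}g)(s,\sigma)\,ds\,d\sigma$.

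The technical heart is an explicit evaluation of $\Phi$. Applying Tonelli's theorem and using $|B(e_i,\rho)|=\rho^{Q_i}$, the inner double integral in $(s,\sigma)$ produces the kernel $\min(r_1(x),r_1(\xi))^{Q_1}\min(r_2(y),r_2(\eta))^{Q_2}$, so that
$$ \Phi(x,y)=\iint\limits_{G_1\times G_2}\min\big(r_1(x),r_1(\xi)\big)^{Q_1}\min\big(r_2(y),r_2(\eta)\big)^{Q_2}g(\xi,\eta)\,d\xi\,d\eta. $$
Splitting each minimum according to whether the integration variable lies inside or outside the relevant ball decomposes $\Phi$ into a sum of four non-negative terms,
$$ \Phi=r_1^{Q_1}r_2^{Q_2}\,\tilde H^{1,1}g+r_1^{Q_1}H_2^{1,1}(r_2^{Q_2}g)+r_2^{Q_2}H_1^{1,1}(r_1^{Q_1}g)+H^{1,1}(r_1^{Q_1}r_2^{Q_2}g), $$
where $r_i$ stands for $r_i(x)$ or $r_i(y)$ as appropriate. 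As there are finitely many non-negative summands, $\Phi^{p'}\approx\sum$ of the $p'$-th powers, so the duality inequality splits into four separate inequalities, one per term.

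Each of these four is a two-weight estimate for one of $\tilde H^{1,1},H_2^{1,1},H_1^{1,1},H^{1,1}$ acting from an $L^{q'}$-space to an $L^{p'}$-space; since $p\leq q$ gives $q'\leq p'$, they fall exactly into the range of Proposition \ref{Hardy-G_1-G_2}. The crucial observation is that the range-side weight, after absorbing the power factors $r_i^{Q_ip'}$, is a product weight: because $w=w_1w_2$ forces $W(r_1(x),r_2(y))=W_1(r_1(x))W_2(r_2(y))$, the weight $W^{-p'}w$ is itself of product type. Hence the product-weight hypothesis of Proposition \ref{Hardy-G_1-G_2} is satisfied (with the roles of $v,w$ and of $p,q$ swapped by duality), and each of the four inequalities is equivalent to the corresponding testing condition in parts (i)--(iv) of that proposition.

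Finally I would simplify the four resulting conditions to the stated (i)--(iv). Here I use the one-dimensional identity, valid precisely because $W_i(\infty)=\infty$,
$$ \int\limits_{G_i\setminus B(e_i,t)}W_i^{-p'}(r_i(x))w_i(x)\,dx=\int\limits_{W_i(t)}^{\infty}u^{-p'}\,du=c\,W_i(t)^{1-p'}, $$
which gives $\big(\int_{G_1\setminus B}\int_{G_2\setminus B}W^{-p'}w\big)^{1/p'}\approx\big(\int_{B(e_1,t)}\int_{B(e_2,\tau)}w\big)^{-1/p}$, together with the conjugacy relations $(q')'=q$, $(p')'=p$ and $(1-q')(1-q)=1$, which convert the domain weight $v^{1-q'}$ back into $v$ at the cost of factors $r_i^{Q_iq}$. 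Tracking the four summands, the $H^{1,1}$-term yields (i), the $\tilde H^{1,1}$-term yields (ii), and the $H_1^{1,1}$- and $H_2^{1,1}$-terms yield (iii) and (iv) respectively. The main obstacle I anticipate is purely organizational: carrying out the four-fold splitting of $\Phi$ correctly, matching each piece to the right part of Proposition \ref{Hardy-G_1-G_2}, and checking that the exponent and weight substitutions collapse exactly onto the stated conditions; no genuinely new estimate beyond Propositions \ref{BHP2} and \ref{Hardy-G_1-G_2} should be needed.
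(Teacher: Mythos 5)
Your proposal is correct and takes essentially the same route as the paper's proof: apply the duality of Proposition \ref{BHP2} to $H^{1,1}$, split the inner integral into four non-negative pieces (your min-kernel formula $\iint \min\big(r_1(x),r_1(\xi)\big)^{Q_1}\min\big(r_2(y),r_2(\eta)\big)^{Q_2}g(\xi,\eta)\,d\xi\,d\eta$ is precisely the paper's decomposition into $I^{(1)}+I^{(2)}+I^{(3)}+I^{(4)}$), characterize each piece by the corresponding part of Proposition \ref{Hardy-G_1-G_2} applied in dual form with the product weight $W^{-p'}w$ on the range side, and simplify via the identity $\int_{G_i\setminus B(e_i,t)}W_i^{-p'}(r_i(x))w_i(x)\,dx=c\,W_i(t)^{1-p'}$, valid since $W_i(\infty)=\infty$. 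The matching of terms to conditions (i)--(iv) is also the same as in the paper, so there is nothing to add.
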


\begin{proof} We follow the proof of Theorem 5.3 in \cite{BHP}. First of all observe that by Proposition \ref{BHP2},  if $w$ is a product
weight, i.e., $w(x_1,  x_2)= w_1(x_1) w_2(x_2)$, such that $W_i(\infty)=\infty$, $i=1, 2$, and $v$ is any weight on
$G_1\times G_2$, then $H^{1,1}$ is bounded from $L^p_{dec,r}(w, G_1)$ to $L^q(v, G_2)$ if and only if

\begin{equation*}\bigg( \iint\limits_{G_1 \times G_2} \bigg( \int\limits_{B(e_1, r_1(x))}    \int\limits_{B(e_2, r_2(x))} \bigg[ \int\limits_{G_1 \setminus B(e_1, r_1(t))}  \int\limits_{G_1 \setminus B(e_2, r_2(\tau))} g(s,  \varepsilon) ds d\varepsilon\bigg] dt d\tau\bigg)^{p'} W^{-p'}(r_1(x), r_2(y)) w(x,y)dx dy \bigg)^{1/p'}
\end{equation*}
\begin{equation}\label{dual}\leq
c \bigg( \iint\limits_{G_1\times G_2} g^{q'}(x,y) v^{1-q'}(x,y) dx dy  \bigg)^{1/q'}, \;\; g\geq 0.
\end{equation}
Further, we have that

\begin{eqnarray*} && \iint\limits_{B(e_1, r_1(x))\times B(e_2, r_2(x))}  \bigg( \int\limits_{G_1 \setminus B(e_1, r_1(t))}  \int\limits_{G_2 \setminus B(e_2, r_2(t))} g(s,\varepsilon) ds d\varepsilon\bigg) dt d\tau \\
 &=& \int\limits_{B(e_1, r_1(x))}    \int\limits_{B(e_2, r_2(x))}  r_1^{Q_1}(t) r_2^{Q_2}(\tau) g(t,\tau) dt d\tau  +  r_1^{Q_1}(x) \int\limits_{G_1 \setminus B(e_1, r_1(x))} \int\limits_{B(e_2, r_2(y))}  r_2^{Q_2}(\tau) g(t, \tau) dt d\tau \\ & +& r_2^{Q_2}(y) \int\limits_{B(e_1, r_1(x))} \int\limits_{G_2 \setminus B(e_2, r_2(y))}  r_1^{Q_1}(t)  g(t, \tau) dt d\tau \\&+&  r_1^{Q_1}(x)r_2^{Q_2}(y) \int\limits_{G_1 \setminus B(e_1, r_1(x))}\int\limits_{G_2 \setminus B(e_2, r_2(y))} g(t, \tau) dt d\tau \\ &=:& I^{(1)}(x,y) + I^{(2)}(x,y)+ I^{(3)}(x,y)+ I^{(4)}(x,y).
 \end{eqnarray*}

It is obvious that \eqref{dual}  holds  if and only if
\begin{equation}\label{j-inequality} \bigg( \iint\limits_{G_1\times G_2} (I^{(j)})^{p'}(x,y) W^{-p'}(r_1(x), r_2(y))w(x,y) dxdy\bigg)^{1/p'}
\leq c \bigg( \iint\limits_{G_1\times G_2} g^{q'}(x,y)  v^{1-q'}(x,y)dx dy \bigg)^{1/q'}
\end{equation}
for $j= 1, 2, 3,4. $ By using  Proposition \ref{Hardy-G_1-G_2} (Part (i)) we find that
$$\Bigg( \iint\limits_{G_1 \times G_2} (I^{(1)})^{p'}(x,y) W^{-p'}(r_1(x), r_2(y))w(x,y) dxdy \Bigg)^{1/p'}\leq c  \Bigg( \iint\limits_{G_1 \times G_2}  g^{q'}(x,y) v^{1-q'}(x,y) dx dy \Bigg)^{1/q'} $$
if and only if

 \begin{eqnarray*} && \bigg( \int\limits_{G_1 \setminus B(e_1, t)} \int\limits_{G_2 \setminus B(e_2, \tau)}  W^{-p'}(r_1(x), r_2(y)) w(x,y)dx dy  \bigg)^{1/p'} \bigg( \iint\limits_{B(e_1, t)\times B(e_2,\tau)} \bigg( \frac{v^{1-q'}(x,y)}{r_1^{Q_1 q'}(x) r_2^{Q_2 q'}(y)}\bigg)^{1-q} dxdy \bigg)^{1/q} \\ &=& c_p \bigg( \iint\limits_{B(e_1, t)\times B(e_2, \tau)} w(x,y) dx dy \bigg)^{-1/p} \bigg( \iint\limits_{B(e_1, t)\times B(e_2, \tau)}  v(x,y) r_1^{Q_1 q}(x)r_2^{Q_2 q}(y) dx dy \bigg)^{1/q} \leq C.
 \end{eqnarray*}
In the latter equality we used the equality

$$  \bigg( \int\limits_{G_i \setminus B(e_i, t)}  W_i^{-p'}\big(r_i(x)\big) w_i(x) dx  \bigg)^{1/p'}=\bigg( \int\limits_{B(e_i, t)} w_i(x) dx \bigg)^{-1/p}, \;\;\ i=1,2,$$
which is direct consequence of integration by parts and Proposition \ref{polar}. Taking now Proposition  \ref{Hardy-G_1-G_2} (Part (ii)) into account  we find that \eqref{j-inequality} holds for $j=4$ if and only if condition (ii) is satisfied, while
Proposition \ref{Hardy-G_1-G_2} (Parts (iii) and (iv)) and the following observation:

\begin{eqnarray*}&& \sup_{a_1, a_2>0} \bigg( \int\limits_{G_1 \setminus B(e_1, a_1)} w_1(x) W_1^{-p'}(r_1(x))dx\bigg)^{1/p'}
\bigg(\int\limits_{B(e_2, a_2)} r_2^{p' Q_2}(y) W_2^{-p'}(r_2(y))w_2(y)dy\bigg)^{1/p'} \\ &\times& \bigg(\int\limits_{B(e_1, a_1)}  \int\limits_{G_2 \setminus B(e_2, a_2)} r_1^{Q_1q}(x) v(x,y)dx dy \bigg)^{1/q}\\ &=& c_p \sup_{a_1, a_2>0} \bigg( \int\limits_{B(e_1, a_1)} w_1(x) dx \bigg)^{-1/p} \bigg(\int\limits_{B(e_2, a_2)} r_2^{Q_2p'}(y)  W_2^{-p'}(r_2(y)) w_2(y)dy\bigg)^{1/p'}
\\&\times& \bigg(\int\limits_{B(e_1, a_1)}  \int\limits_{G_2\setminus B(e_2, a_2)} r_1^{Qq}(x)  v(x,y)dx dy \bigg)^{1/q} <\infty;
\end{eqnarray*}

\begin{eqnarray*} && \sup_{a_1, a_2>0} \bigg(\int\limits_{B(e_1, a_1)}  r_1^{Q_1p'}(x) W_1^{-p'}(r_1(x))w_1(x)dx \bigg)^{1/p'}\bigg( \int\limits_{G_2 \setminus B(e_2, a_2)} w_2(y)W_2^{-p'}(r_2(y)) dy \bigg)^{1/p'}
\\
&\times & \bigg(\int\limits_{G_1 \setminus B(e_1, a_1)} \int\limits_{B(e_2, a_2)}   r_2^{Q_2 q}(y)  v(x,y)dx dy  \bigg)^{1/q}  \\ &=& c_p\sup_{a_1, a_2>0} \bigg(\int\limits_{B(e_1, a_1)} r_1^{Q_1 p'}(x) W_1^{-p'}(r_1(x))w_1(x)dx \bigg)^{1/p'}
\bigg( \int\limits_{B(e_2, a_2)} w_2(t_2) dt_2\bigg)^{-1/p}   \\
&\times & \bigg(\int\limits_{G_1\setminus B(e_1, a_1)}\int\limits_{B(e_2, a_2)}    r_2^{Q_2 q}(y)  v(x,y)dx dy  \bigg)^{1/q} <\infty
\end{eqnarray*}
yield \eqref{j-inequality} for $j=2,3$.
\end{proof}

Let
$$(J_{\alpha_1,\alpha_2}f)(x,y)= \int\limits_{B \big(e_1, 2 c_0 r_1(x)\big)}\int\limits_{B \big(e_2, 2 c_0 r_2(y)\big)} f(t,\tau) r_1(xt^{-1})^{\alpha_1-Q_1} r_2(y\tau^{-1})^{\alpha_2-Q_2} dt d\tau, $$

$$(J_{\alpha_1}S_{\alpha_2}f)(x,y)= \int\limits_{B \big(e_1, 2 c_0 r_1(x)\big)}\int\limits_{G_2 \setminus B\big(e_2, 2 c_0 r_2(y)\big)} f(t,\tau) r_1(xt^{-1})^{\alpha_1-Q_1} r_2(y\tau^{-1})^{\alpha_2-Q_2} dt d\tau, $$

$$(S_{\alpha_1}J_{\alpha_2}f)(x,y)= \int\limits_{G_1 \setminus B \big(e_1, 2 c_0 r_1(x)\big)}\int\limits_{ B\big(e_2, 2 c_0 r_2(y)\big)} f(t,\tau) r_1(xt^{-1})^{\alpha_1-Q_1} r_2(y\tau^{-1})^{\alpha_2-Q_2} dt d\tau, $$

$$(S_{\alpha_1, \alpha_2}f) (x,y) = \int\limits_{G_1 \setminus B \big(e_1, 2 c_0 r_1(x)\big)}\int\limits_{G_2\setminus B \big(e_2, 2 c_0 r_2(y)\big)} f(t,\tau) r_1(xt^{-1})^{\alpha_1-Q_1} r_2(y\tau^{-1})^{\alpha_2-Q_2} dt d\tau, $$
where $c_0$ is the constant from the triangle inequality for the homogeneous norms $r_1$ and $r_2$.

It is obvious that
\begin{equation}\label{rep}
I_{\alpha_1, \alpha_2}f = J_{\alpha_1,\alpha_2}f+ J_{\alpha_1}S_{\alpha_2}f +S_{\alpha_1}J_{\alpha_2}f +S_{\alpha_1, \alpha_2}f.
\end{equation}

Now we formulate the main result of this section.
\begin{thm}\label{Main-theorem}   Let $1<p\leq q<\infty$. Assume that $v$ and $w$ are weights on $G_1\times G_2$ such that $w(x,y)=  w_1(x)w_2(y)$. Suppose that either $w_i\in DC^{\alpha_i,p}$, $i=1,2$, or $v\in DC(x) \cap DC(y)$.  Then  the operator $I_{\alpha_1, \alpha_2}$ is bounded from $L^p_{dec,r}(w, G_1\times G_2)$ to $L^q(v, G_1 \times G_2)$ if and only if the following  conditions are satisfied:

{\rm{(i)}}

$$ A_1:= \sup_{a_1, a_2>0} \bigg(\int\limits_{B(e_1, a_1)}  \int\limits_{B(e_2, a_2)} w(x,y) dx dy\bigg)^{-1/p}  \bigg(\int\limits_{B(e_1, a_1)}  \int\limits_{B(e_2, a_2)} \Big( r_1^{\alpha_1}(x)  r_2^{\alpha_2}(y) \Big)^qv(x,y) dx dy \bigg)^{1/q}< \infty; $$

{\rm{(ii)}}
$$A_2:= \sup_{a_1, a_2>0} \bigg(\int\limits_{B(e_1, a_1)}  \int\limits_{B(e_2, a_2)}  r_1^{Q_1 p'}(x)r_2^{Q_2 p'}(y)W^{-p'}(r_1(x), r_2(y)) w(x,y)
dx dy  \bigg)^{1/p'} $$
$$ \times \bigg(\int\limits_{G_1 \setminus B(e_1, a_1)}  \int\limits_{G_2 \setminus B(e_2, a_2)} \Big( r_1^{\alpha_1 -Q_1}(x) r_2^{\alpha_2 -Q_2}(y)\Big)^qv(x,y )dx dy \bigg)^{1/q} <\infty;$$

{\rm{(iii)}}
$$A_3:= \sup_{a_1, a_2>0} \bigg( \int\limits_{B(e_1, a_1)} w_1(x) dx\bigg)^{-1/p} \bigg(\int\limits_{B(e_2, a_2)} r_2^{Q_2 p'}(y) W_2^{-p'}(r_2(y))w_2(y)dy\bigg)^{1/p'} $$
$$\times \bigg(\int\limits_{B(e_1, a_1)}   \int\limits_{G_2\setminus B(e_2, a_2)} r_1^{\alpha_1 q}(x) r_2^{  q(\alpha_2 -Q_2)}(y) v(x,y)dx dy \bigg)^{1/q} <\infty; $$
\rm{(iv)}
$$A_4:= \sup_{a_1, a_2>0} \bigg(\int\limits_{B(e_1, a_1)}   r_1^{ Q_1 p'} (x) W_1^{-p'}(r_1(x))w_1(x)dx \bigg)^{1/p'}\bigg( \int\limits_{B(e_2, a_2)} w_2(y)dy \bigg)^{-1/p}  $$
$$ \bigg(\int\limits_{G_1\setminus B(e_1, a_1)}\int\limits_{B(e_2, a_2)}   r_1^{q(\alpha_1 -Q_1)}(x) r_2^{q\alpha_2} (y) v(x,y)dx dy \bigg)^{1/q} <\infty. $$

\rm{(v)}
$$ A_5:= \sup_{a_1, a_2>0} \bigg(\int\limits_{ G_1 \setminus B(e_1, a_1)}  \int\limits_{G_2\setminus B(e_2, a_2)}  r_1^{\alpha_1 p'}(x) r_2^{\alpha_2 p'}(y) W^{-p'}(r_1(x), r_2(y)) w(x,y) dx dy\bigg)^{1/p'}  $$
$$ \times \bigg(\int\limits_{B(e_1, a_1)}  \int\limits_{B(e_2, a_2)}  v(x,y) dx dy \bigg)^{1/q}< \infty; $$

\rm{(vi)}
$$ A_6:=\sup_{a_1, a_2>0} \bigg(\int\limits_{ B(e_1, a_1)}  w_1(x) dx\bigg)^{-1/p} \bigg(\int\limits_{G_2\setminus B(e_2, a_2)}  r_2^{\alpha_2 p'}(y) W_2^{-p'}(r_2(y)) w_2(y) dy\bigg)^{1/p'}  $$
$$\times \bigg(\int\limits_{B(e_1, a_1)}  \int\limits_{B(e_2, a_2)} r_1^{\alpha_1 q}(x) v(x,y) dx dy \bigg)^{1/q}< \infty; $$

\rm{(vii)}
$$ A_7:= \sup_{a_1, a_2>0} \bigg(\int\limits_{ B(e_1, a_1)}  r_1^{Q_1 p'}(x)  W_1^{-p'}(r_1(x)) w_1(x) dx\bigg)^{1/p'} \bigg( \int\limits_{G_2\setminus B(e_2, a_2)}  r_2^{\alpha_2 p'}(y) W_2^{-p'}(r_2(y)) w_2(y)  dy\bigg)^{1/p'}$$
  $$\times \bigg(\int\limits_{G_1\setminus B(e_1, a_1)}  \int\limits_{B(e_2, a_2)} r_1^{(\alpha_1 -Q_1)q}(x) v(x,y) dx dy \bigg)^{1/q}< \infty; $$

\rm{(viii)}
$$ A_8:= \sup_{a_1, a_2>0} \bigg(\int\limits_{G_2\setminus B(e_1, a_1)} r_1^{\alpha_1 p'}(x) W_1^{-p'}(r_1(x)) w_1(x) dx\bigg)^{-1/p} \bigg( \int\limits_{B(e_2, a_2)} w_2(y)  dy\bigg)^{1/p'}   $$
$$\times \bigg(\int\limits_{B(e_1, a_1)}  \int\limits_{B(e_2, a_2)} r_2^{\alpha_2 q}(x) v(x,y) dx dy \bigg)^{1/q}< \infty; $$

\rm{(ix)}
$$ A_9:= \sup_{a_1, a_2>0} \bigg(\int\limits_{B(e_1, a_1) }  r_2^{Q_2 p'}(y)  W_2^{-p'}(r_2(y)) w_2(y) dy\bigg)^{1/p'} \bigg(\int\limits_{G_1 \setminus B(e_1, a_1) }  r_1^{\alpha_1 p'}(x) W_1^{-p'}(r_1(x)) w_1(x)  dx\bigg)^{1/p'} $$
 $$\times \bigg(\int\limits_{B(e_1, a_1)}  \int\limits_{G_2 \setminus B(e_2, a_2)} r_2^{(\alpha_2 -Q_2)q}(y) v(x,y) dx dy \bigg)^{1/q}< \infty. $$

\end{thm}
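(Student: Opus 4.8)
The plan is to combine the decomposition \eqref{rep} with the one--variable machinery of Section~3, exploiting the fact that each of the four summands is a tensor product, in the two variables, of a ``near'' operator $J_{\alpha_i}$ and/or a ``far'' operator $S_{\alpha_i}$. Since all four kernels are non-negative and we test only on $f\in\mathcal{DR}(G_1\times G_2)$ (so $f\geq 0$), each summand is pointwise dominated by $I_{\alpha_1,\alpha_2}f$, while their sum equals $I_{\alpha_1,\alpha_2}f$. Hence $I_{\alpha_1,\alpha_2}$ is bounded from $L^p_{dec,r}(w,G_1\times G_2)$ to $L^q(v,G_1\times G_2)$ if and only if each of $J_{\alpha_1,\alpha_2}$, $J_{\alpha_1}S_{\alpha_2}$, $S_{\alpha_1}J_{\alpha_2}$ and $S_{\alpha_1,\alpha_2}$ is. I would then match the nine conditions to these four pieces, the count $9=2\cdot2+2\cdot1+1\cdot2+1\cdot1$ reflecting that a $J$--slot contributes two conditions (as the Hardy operator on the cone does, see Theorem \ref{Hardy-Inequality-G}) whereas an $S$--slot contributes one (as in Lemma \ref{main2}).

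For $J_{\alpha_1,\alpha_2}$ I would first prove the two--variable analogue of Proposition \ref{main1}, namely $J_{\alpha_1,\alpha_2}f\approx r_1^{\alpha_1-Q_1}(x)r_2^{\alpha_2-Q_2}(y)(H^{1,1}f)(x,y)=:H_{\alpha_1,\alpha_2}f$ for $f\in\mathcal{DR}(G_1\times G_2)$, obtained by applying Proposition \ref{main1} in each variable separately (the required monotonicity holds because $f$ is radially decreasing in each variable uniformly to the other). Writing $\widetilde v:=r_1^{(\alpha_1-Q_1)q}(x)r_2^{(\alpha_2-Q_2)q}(y)v$, boundedness of $H_{\alpha_1,\alpha_2}$ from $L^p_{dec,r}(w)$ to $L^q(v)$ coincides with that of $H^{1,1}$ from $L^p_{dec,r}(w)$ to $L^q(\widetilde v)$, to which Proposition \ref{double-Hardy} applies; its four conditions, with $v$ replaced by $\widetilde v$, collapse (the powers $r_i^{Q_i}$ cancelling against $r_i^{-Q_i}$) to exactly conditions (i)--(iv). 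Symmetrically, for the purely ``far'' piece $S_{\alpha_1,\alpha_2}$ I would invoke the product Sawyer duality Proposition \ref{BHP2}, establish a product version of the two--sided estimate \eqref{two-sided} showing $\iint_{B(e_1,r_1(x))\times B(e_2,r_2(y))}(S^*_{\alpha_1,\alpha_2}g)\approx r_1^{\alpha_1}(x)r_2^{\alpha_2}(y)\iint_{B(e_1,c r_1(x))\times B(e_2,c r_2(y))}g$, and thereby reduce to a product forward--Hardy inequality governed by Proposition \ref{Hardy-G_1-G_2}(i), which produces the single condition (v).

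The mixed summands are then handled slot by slot. For $J_{\alpha_1}S_{\alpha_2}$ the first variable is treated as in Proposition \ref{main1} (Hardy, contributing two conditions) and the second as in Lemma \ref{main2} (dual two--sided estimate, one condition); after applying Proposition \ref{BHP2} and expanding the resulting iterated Hardy expression into its ``corner'' terms as in the proof of Proposition \ref{double-Hardy}, the surviving inequalities are governed by the one--sided mixed operators $H_1^{a,b}$, $H_2^{a,b}$ of Proposition \ref{Hardy-G_1-G_2}(iii)--(iv), and the two conditions that remain are precisely (vi) and (vii). The companion piece $S_{\alpha_1}J_{\alpha_2}$ is identical after interchanging the roles of $G_1$ and $G_2$ and yields (viii) and (ix). Throughout, the hypothesis that either $w_i\in DC^{\alpha_i,p}$ ($i=1,2$) or $v\in DC(x)\cap DC(y)$ enters exactly as in Lemma \ref{main2}: it lets one absorb the dilation constants $2c_0,4c_0$ appearing in the necessary and sufficient forms of each estimate, so that the testing balls $B(e_i,t/(2c_0))$, $B(e_i,t/(4c_0))$ and $B(e_i,t)$ may be identified and the two--sided bounds coalesce into the single clean suprema stated in (i)--(ix).

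The main obstacle I anticipate is the mixed pieces. Unlike $J_{\alpha_1,\alpha_2}$ and $S_{\alpha_1,\alpha_2}$, where both slots are of the same type and one may quote Proposition \ref{double-Hardy} (respectively its dual analogue) essentially verbatim, the operators $J_{\alpha_1}S_{\alpha_2}$ and $S_{\alpha_1}J_{\alpha_2}$ force one to run the Sawyer duality of Proposition \ref{BHP2} with a kernel that is ``local'' in one variable and ``global'' in the other; here one must verify the correct two--sided estimate in each slot simultaneously and track the corner expansion carefully, so that exactly the two advertised conditions survive and no spurious cross terms appear. The product structure $w=w_1w_2$ is what makes Proposition \ref{BHP2} available, while the generality of $v$ keeps every $v$--integral genuinely two--dimensional, which is the source of the bookkeeping difficulty.
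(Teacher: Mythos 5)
Your proposal is correct and follows essentially the same route as the paper: the same four-fold decomposition \eqref{rep}, the same reduction of $J_{\alpha_1,\alpha_2}$ to the double Hardy operator so that Proposition \ref{double-Hardy} yields conditions (i)--(iv), the same dual two-sided estimate for $S_{\alpha_1,\alpha_2}$ giving condition (v), and the same Sawyer-duality plus ``corner expansion'' treatment of the mixed operators $J_{\alpha_1}S_{\alpha_2}$ and $S_{\alpha_1}J_{\alpha_2}$ producing (vi)--(ix). The role you assign to the doubling hypotheses--absorbing the dilation constants $2c_0$ and $4c_0$ so that the two-sided testing conditions coalesce into the stated suprema--is exactly how the paper uses them.
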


\begin{proof}

Let us assume that $v\in DC(x) \cap DC(y)$. The case when $w_i\in DC^{\alpha_i,p}(G_i)$, $i=1,2$  follows analogously. By using representation \eqref{rep} we have to investigate the boundedness of the operators  $J_{\alpha_1,\alpha_2}f$,  $J_{\alpha_1}S_{\alpha_2}f$, $S_{\alpha_1}J_{\alpha_2}f$, $S_{\alpha_1, \alpha_2}f$ separately.

Since $f\in \mathcal{DR}(G_1\times G_2)$ by using the arguments of the proof of Proposition \ref{main1} it can be checked that
$$  (J_{\alpha_1,\alpha_2}f)(x,y)  \approx r_1^{\alpha_1-Q_1}(x)  r_2^{\alpha_2-Q_2}(y)  \iint\limits_{B(e_1, r_1(x))\times B(e_2, r_2(y))} f(t,\tau) dt d\tau $$
(see also \cite {MMS} for similar estimate in the case of the multiple one-sided potentials on ${\Bbb{R}}_+^2$).
Hence, by Proposition \ref{double-Hardy} we have that  $J_{\alpha_1,\alpha_2}$ is bounded from $L^p_{dec,r}(w, G_1\times G_2)$ to $L^q(v, G_1 \times G_2)$ if and only if conditions (i)- (iv) hold.

Observe that the dual to $S_{\alpha_1, \alpha_2}$ is given by
$$(S^{*}_{\alpha_1, \alpha_2} g)(x,y)=  \iint\limits_{B(e_1, r_1(x)/(2c_0))\times B(e_2, r_2(y)/(2c_0))}g(t,\tau) r_1^{\alpha_1-Q_1}(xt^{-1}) r_2^{\alpha_2-Q_2}(y\tau^{-1})dt d\tau. $$
Further, Tonelli's theorem together with  Lemma \ref{estimate} for both variables implies that there are positive constants $c_1$ and $c_2$ such that for all $(x,y)\in G_1 \times G_2$ for the dual  (see also the proof of Lemma  \ref{main2})

\begin{eqnarray*} &&
r_1^{\alpha_1}(x) r_2^{\alpha_2}(y) \iint\limits_{B(e_1, r_1(x)/(4c_0))\times B(e_2, r_2(y)/(4c_0))}g(t,\tau) dt d\tau \leq c_1 \iint\limits_{B(e_1, r_1(x))\times B(e_2, r_2(y))} \big( S^*_{\alpha_1, \alpha_2} g\big) (t, \tau) dt d\tau \\
&\leq& c_2
r_1^{\alpha_1}(x) r_2^{\alpha_2}(y) \iint\limits_{B(e_1, r_1(x)/(2c_0))\times B(e_2, r_2(y)/(2c_0))}g(t,\tau) dt d\tau.
\end{eqnarray*}
Applying Propositions \ref{Hardy-G_1-G_2} and  \ref{double-Hardy} with the condition that $v\in DC(G_1\times G_2)$ we find that the operator  $S_{\alpha_1, \alpha_2}$ is bounded from $L^p_{dec,r}(w, G_1\times G_2)$ to $L^q(v, G_1 \times G_2)$ if and only if condition (v) is satisfied.

Further, observe that due to the fact that $f$ is radially decreasing with respect to the first variable we have

$$ (J_{\alpha_1} S_{\alpha_2} f)(x,y)\approx ({\mathcal{H}}_{\alpha_1}  S_{\alpha_2} f)(x,y),$$
where
$$({\mathcal{H}}_{\alpha_1}  S_{\alpha_2} f)(x,y) = r_1^{\alpha_1-Q_1}(x) \int\limits_{B\big(e_1, 2 c_0 r_1(x)\big)}\int\limits_{ G_2 \setminus B\big(e_2, 2 c_0 r_2(y)\big)} f(t,\tau)  r_2(y\tau^{-1})^{\alpha_2-Q_2} dt d\tau. $$

Dual of ${\mathcal{H}}_{\alpha_1}  S_{\alpha_2}$ is given by
$$ \big( {\mathcal{H}}^{*}_{\alpha_1}  S^{*}_{\alpha_2} g\big)(t,\tau) =   \int\limits_{G_1\setminus B(e_1, r(t))}
\int\limits_{B(e_2, r(\tau)/2c_0)}r_1^{\alpha_1-Q_1}(s) r_2^{\alpha_2-Q_2}(\varepsilon \tau ^{-1}) f(s,\varepsilon) ds d\varepsilon. $$
Further, we have
 \begin{eqnarray*}
 T(x,y)&:=&  \iint\limits_{B(e_1, r_1(x))\times B(e_2, r_2(y))} ({\mathcal{H}}^{*}_{\alpha_1}  S^{*}_{\alpha_2} g)(t,\tau) dt d\tau \\
 &=& \iint\limits_{B(e_1, r_1(x))\times B(e_2, r_2(y))} \bigg( \int\limits_{B(e_1, r_1(x))\setminus B(e_1, r(t))}
\int\limits_{B(e_2, r(\tau)/2c_0)}r_1^{\alpha_1-Q_1}(s) r_2^{\alpha_2-Q_2}(\tau \varepsilon^{-1}) f(s,\varepsilon) ds d\varepsilon\bigg) dt d\tau \\
&+&  \iint\limits_{B(e_1, r_1(x))\times B(e_2, r_2(y))} \bigg( \int\limits_{G_1\setminus B(e_1, r_1(x))}
\int\limits_{B\big(e_2, r(\tau)/(2c_0)\big)}r_1^{\alpha_1-Q_1}(s) r_2^{\alpha_2-Q_2}(\tau \varepsilon^{-1}) f(s,\varepsilon) ds
d\varepsilon\bigg) dt d\tau \\
&=:&  T_1(x,y)+T_2(x,y).
\end{eqnarray*}
 Tonelli's theorem for $G_1$ , the inequality $r_2^{\alpha_2-Q_2}(\tau \varepsilon^{-1})\geq c r_2^{\alpha_2-Q_2}(y)$ for $\tau\in B(e_2, r(y))$, $\varepsilon \in B\big(e_2, r(\tau)/(2c_0)\big)$, and the fact that the integral $\int\limits_{B(e_1, \tau)} f(s, \varepsilon) ds $ is decreasing in  $\tau$ uniformly  to $\varepsilon$ yield that
\begin{eqnarray*} T_1(x,y) &\geq& c r_2^{\alpha_2-Q_2}(y) \int\limits_{B(e_1, r_1(x))} \int\limits_{B(e_2, r_2(y))\setminus B(e_2, r_2(y)/2)} \bigg( \int\limits_{B(e_1, r_1(x))\setminus B(e_1, r(t))}
\int\limits_{B\big(e_2, r_2(y)/(4c_0)\big)}r_1^{\alpha_1-Q_1}(s)  f(s,\varepsilon) ds d\varepsilon\bigg) dt d\tau \\
&= & c  r_2^{\alpha_2}(y) \int\limits_{B(e_1, r_1(x))}  \bigg( \int\limits_{B(e_1, r_1(x))\setminus B(e_1, r(t))}\bigg( r_1^{\alpha_1-Q_1}(s) \bigg( \int\limits_{B\big(e_2, r_2(y)/(4c_0)\big)}   f(s,\varepsilon)  d\varepsilon\bigg)  ds \bigg) dt \\
&=& c  r_2^{\alpha_2}(y) \int\limits_{B(e_1, r_1(x))}  \bigg( \int\limits_{B(e_1, r_1(x))\setminus B(e_1, r(t))} F(s,y)  ds \bigg) dt = c  r_2^{\alpha_2}(y) \int\limits_{B(e_1, r_1(x))}  F(s,y) \bigg( \int\limits_{ B(e_1, r(s))} dt \bigg)ds \\ &=& c  r_2^{\alpha_2}(y) \int\limits_{B(e_1, r_1(x))}    \int\limits_{B\big(e_2, r_2(y)/(4c_0)\big)}   r_1^{\alpha_1}(s) f(t,\tau)  d\varepsilon  ds .
\end{eqnarray*}
Here we used the notation
$$F(s,y): = \int\limits_{B\big(e_2, r_2(y)/(4c_0)\big)}   f(s,\varepsilon)  d\varepsilon.$$

Taking into account that the function $\int\limits_{B(e_2, 2c_0\lambda)} f(s, \varepsilon) d\varepsilon $ is decreasing in $\lambda$ uniformly to $s$, the inequality  $r_2(\tau\varepsilon^{-1})\leq c r_2(y)$ for $\tau\in B(e_2, r(y))$, $\varepsilon \in B\big(e_2, r(\tau)/(2c_0)\big)$, and Tonelli's theorem for $G_1$ we find that

$$  T_2(x,y) \geq c  r_1^{Q_1}(x)  r_2^{\alpha_2}(y) \int\limits_{G_1\setminus B(e_1, r_1(x))}    \int\limits_{B\big(e_2, r_2(y)/(4c_0)\big)}   r_1^{\alpha_1-Q_1}(s) f(t,\tau)  d\varepsilon  ds . $$

To get the upper estimate, observe that  Tonelli's theorem for $G_1\times G_2$ and Lemma \ref{estimate} for $r_2$ yield that
\begin{eqnarray*}
T_1(x,y) &\leq&  \int\limits_{B(e_1, r_1(x))} \int\limits_{B\big(e_2, r_2(y)/(2c_0)\big)}  r_1^{\alpha_1-Q_1}(s)f(s,\varepsilon)  \bigg( \int\limits_{B(e_1, r_1(s))}
\int\limits_{B\big(e_2, r_2(y)\big)\setminus B\big(e_2, 2c_0 r_2(\varepsilon)\big)}r_2^{\alpha_2-Q_2}(\tau\varepsilon^{-1})   dt d\tau \bigg) ds  d\varepsilon \\
&\leq& c r_2^{\alpha_2}(y)  \iint\limits_{B\big(e_1, r_1(x)\big)\times B\big(e_2, r_2(y)/(2c_0)\big)}  r_1^{\alpha_1}(s) f(s,\varepsilon) ds d\varepsilon.
\end{eqnarray*}

Similarly,
$$ T_2(x,y) \leq  c r_1^{Q_1}(x) r_2^{\alpha_2}(y) \iint\limits_{G_1 \setminus B\big(e_1, r_1(x)\big)\times B\big(e_2, r_2(y)/(2c_0)\big)}  r_1^{\alpha_1-Q_1}(s) f(s,\varepsilon)  ds d\varepsilon. $$

Summarazing these estimates we see that there are positive constants $c_1$ and $c_2$ depending only on $\alpha_1$, $\alpha_2$, $Q_1$ and $Q_2$ such that

\begin{eqnarray*}
&& r_2^{\alpha_2}(y)   \iint\limits_{B\big(e_1, r_1(x)\big)\times B\big(e_2, r_2(y)/(4c_0)\big)}  r_1^{\alpha_1}(s) f(s,\varepsilon) ds d\varepsilon \\
&+& r_1^{Q_1}(x) r_2^{\alpha_2}(y)   \iint\limits_{G_1 \setminus B\big(e_1, r_1(x)\big)\times B\big(e_2, r_1(y)/(4c_0)\big)}  r_1^{\alpha_1-Q_1}(s) f(s,\varepsilon)  ds d\varepsilon.
 \\
 &\leq & c_1 T(x,y) \leq r_2^{\alpha_2}(y)   \iint\limits_{B\big(e_1, r_1(x)\big)\times B\big(e_2, r2(y)/(2c_0)\big)}  r_1^{\alpha_1}(s) f(s,\varepsilon)  ds d\varepsilon \\
 &+& r_1^{Q_1}(x) r_2^{\alpha_2}(y)   \iint\limits_{G_1 \setminus B\big(e_1, r_1(x)\big)\times B\big(e_2, r_1(y)/(2c_0)\big)}  r_1^{\alpha_1-Q_1}(s) f(s,\varepsilon)  ds d\varepsilon.
\end{eqnarray*}

Taking Propositions \ref{Hardy-G_1-G_2} and  \ref{BHP2} into account together with the doubling condition for $v$ with respect to the second variable we see that the operator $J_{\alpha_1} S_{\alpha_2}$ is bounded from $L^p_{dec,r}(w,G_1)$ to $L^q(v,G_2)$ if and only if the conditions (vi) and (vii) are satisfied.

By the similar manner (changing the roles of the first and second variables) we can get that $S_{\alpha_1} J_{\alpha_2}$ is bounded from $L^p_{dec,r}(w,G_1)$ to $L^q(v,G_2)$ if and only if the conditions (viii) and (ix) are satisfied.

 \end{proof}

 Theorem \ref{Main-theorem} and Remark \ref{Rem}   imply  criteria for the trace inequality for $I_{\alpha_1, \alpha_2}$. Namely the following statement holds:

\begin{thm}\label{Main-theorem-1}   Let $1<p\leq q<\infty$ and let $0<\alpha_i<Q_i/p$, $i=1,2$. Then $I_{\alpha_1, \alpha_2}$ is bounded from $L^p_{dec,r}(G_1\times G_2)$ to $L^q(v, G_1 \times G_2)$ if and only if the following  condition holds

$$ B:= \sup_{a_1, a_2>0} \bigg(\int\limits_{B(e_1, a_1)}  \int\limits_{B(e_2, a_2)} v(x,y) dx dy\bigg)^{1/q}  a_1^{\alpha_1-Q_1/p} a_2^{\alpha_2-Q_2/p} < \infty. $$

\end{thm}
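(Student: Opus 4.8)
The plan is to specialize Theorem \ref{Main-theorem} to the constant weight $w\equiv 1$ (so that $w_1\equiv w_2\equiv 1$ and $L^p_{dec,r}(w,G_1\times G_2)$ becomes $L^p_{dec,r}(G_1\times G_2)$) and to show that, under this choice, each of the nine conditions (i)--(ix) is equivalent to the single condition $B<\infty$. First I would check that Theorem \ref{Main-theorem} applies: since $0<\alpha_i<Q_i/p$, Remark \ref{Rem} (with $\gamma=\alpha_i$) gives $w_i\equiv 1\in DC^{\alpha_i,p}(G_i)$, and $W_i(\infty)=\lim_{t\to\infty}t^{Q_i}=\infty$, so the product--weight hypotheses are met. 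The decisive simplification is that for $w\equiv 1$ the distribution functions are explicit, $W_i(t)=t^{Q_i}$ and $W(t,\tau)=t^{Q_1}\tau^{Q_2}$; consequently every factor of the form $r_i^{\beta p'}W_i^{-p'}w_i$ or $r_1^{\beta_1 p'}r_2^{\beta_2 p'}W^{-p'}w$ collapses to a pure power of the homogeneous norms, after which each $A_k$ becomes a product of powers of $a_1,a_2$ against an integral of $v$.

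For the direction ``boundedness $\Rightarrow B<\infty$'' I would single out condition (v). With $w\equiv 1$ its $w$-factor factors as a product of one-variable tails $\int_{G_i\setminus B(e_i,a_i)} r_i(x)^{(\alpha_i-Q_i)p'}\,dx$; by polar coordinates (Proposition \ref{polar}) this equals $c_i\int_{a_i}^{\infty} t^{(\alpha_i-Q_i)p'+Q_i-1}\,dt$, which converges \emph{precisely} because $\alpha_i<Q_i/p$ and equals $c_i\,a_i^{(\alpha_i-Q_i)p'+Q_i}$. Raising to the power $1/p'$ turns the exponent into $\alpha_i-Q_i/p$, so the $w$-factor of $A_5$ equals $c\,a_1^{\alpha_1-Q_1/p}a_2^{\alpha_2-Q_2/p}$ and hence $A_5\approx B$. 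Thus boundedness forces $A_5<\infty$, whence $B<\infty$.

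For the converse, ``$B<\infty\Rightarrow$ boundedness'', I would show $A_k\ll B$ for $k=1,\dots,9$ and then invoke Theorem \ref{Main-theorem}. Two mechanisms suffice. When $v$ is integrated over a product of balls (conditions (i),(v),(vi),(viii)), I bound each radial factor in the $v$-integrand by its extremal value on the ball ($r_1\le a_1$, $r_2\le a_2$) and apply $B$ in the form $\int_{B(e_1,a_1)}\int_{B(e_2,a_2)}v\le B^q a_1^{(Q_1/p-\alpha_1)q}a_2^{(Q_2/p-\alpha_2)q}$; the $a_i$-powers produced by the $w$-factors then cancel exactly against those of $B$. When instead $v$ is integrated against a negative radial power $r_i^{(\alpha_i-Q_i)q}$ over a complement $G_i\setminus B(e_i,a_i)$ (conditions (ii),(iii),(iv),(vii),(ix)), I decompose the complement into dyadic annuli $B(e_i,2^{j+1}a_i)\setminus B(e_i,2^{j}a_i)$, on which $r_i\approx 2^{j}a_i$, apply $B$ on each enclosing ball, and sum. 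The combined exponent per scale collapses to $(2^{j}a_i)^{(\alpha_i-Q_i)q+(Q_i/p-\alpha_i)q}=(2^{j}a_i)^{-Q_iq/p'}$, which is summable, and the leftover powers of $a_i$ again match those coming from the $w$-factors, yielding $A_k\ll B$.

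The main obstacle is the exponent bookkeeping in this last step: in each of the five ``complement'' cases one must verify that the powers of $a_1$ and $a_2$ contributed by the $w$-side factors, by the extremal radial bounds, and by the geometric sums combine to $a_1^{0}a_2^{0}$, so that the supremum is a finite multiple of $B$. Both the convergence of the one-variable tails in the $w$-factors and the summability of the dyadic $v$-series rest on the same restriction $0<\alpha_i<Q_i/p$, so no case is lost. Once all nine estimates $A_k\ll B$ are established, Theorem \ref{Main-theorem} yields the boundedness of $I_{\alpha_1,\alpha_2}$, which together with the previous paragraph gives the claimed equivalence.
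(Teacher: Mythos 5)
Your proposal is correct, and its sufficiency half coincides with the paper's argument: the paper likewise specializes Theorem \ref{Main-theorem} to $w\equiv 1$ (justified by Remark \ref{Rem} and $W_i(\infty)=\infty$) and rests on the inequality $\max\{A_1,\dots,A_9\}\leq cB$, which it asserts without detail and which your two mechanisms correctly supply --- the extremal bound $r_i\leq a_i$ on balls, and the dyadic-annuli sum on complements, where the per-scale exponent indeed collapses to $-Q_iq/p'$ and is summable. Where you genuinely diverge is the necessity direction. The paper never invokes the only-if half of Theorem \ref{Main-theorem}: it simply inserts the radially decreasing test functions $f_{a_1,a_2}=\chi_{B(e_1,a_1)}\chi_{B(e_2,a_2)}$ into the norm inequality, noting $\|f_{a_1,a_2}\|_{L^p}=(a_1^{Q_1}a_2^{Q_2})^{1/p}$ and $(I_{\alpha_1,\alpha_2}f_{a_1,a_2})(x,y)\geq c\,a_1^{\alpha_1}a_2^{\alpha_2}$ on $B(e_1,a_1)\times B(e_2,a_2)$, which gives $B<\infty$ at once. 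Your route --- use the necessity of condition (v) and the exact evaluation of the tail integrals to get $A_5\approx B$ (your convergence check $(\alpha_i-Q_i)p'+Q_i<0$ iff $\alpha_i<Q_i/p$ is right) --- is also valid, and is slicker in that it requires no computation with the operator itself; its cost is that it leans on the full duality machinery behind Theorem \ref{Main-theorem}, whereas the paper's test-function computation is elementary and self-contained, proving necessity independently of that theorem. One incidental point: your exponent bookkeeping balances only when conditions (viii) and (ix) are read in the form symmetric to (vi) and (vii) under interchanging the two variables; as printed, (viii) has the exponents $-1/p$ and $1/p'$ transposed (a typo in the paper), so your verification in effect corrects the display rather than exposing a gap in your argument.
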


\begin{proof}

Sufficiency is a consequence of the inequality $\max \{ A_1, \cdots, A_9\}\leq c B$, while necessity follows immediately by taking the test function $f_{a_1, a_2}(x,y) = \chi_{B(e_1, a_1)}(x) \chi_{B(e_2, a_2)}(y)$, $a_1, a_2>0$.
\end{proof}

\vskip+0.2cm

%\section*{Competing interests} The authors declare that they have no any competing interests.

%\section*{Authors' contributions} G.M. and M.S. established Propositions \ref{Duality}, \ref{Saw-Dua},  \ref{BHP1}, %\ref{BHP2} and checked the proofs of the statements throughout the paper. All authors read and approved the final %manuscript.

\section*{Acknowledgements}
The first author is grateful to Professor V. Kokilashvili for drawing his attention to the two-weight problem for multiple Riesz potentials.

The first author was partially supported by  the Shota Rustaveli National
Science Foundation Grant (Contract Numbers:  D/13-23 and 31/47).

%\section*{References}

\end{document}